\newtheorem{definition}{Definition}[section]
\newtheorem{rem}[definition]{Remark}
\newtheorem{prop}[definition]{Proposition}
\newtheorem{lem}[definition]{Lemma}
\newtheorem{coro}[definition]{Corollary}
\newtheorem{teo}[definition]{Theorem}
\numberwithin{equation}{section}
\def\R{{\mathbb{R}}}
\def\epsilon{{\varepsilon}}
\def\phi{{\varphi}}
\def\theta{{\vartheta}}
\begin{document}


\title{{\bf Lines of curvature of the double torus}}
\author[M. Garc\'{\i}a Monera, V. G\'omez Guti\'errez and 
F. S\'anchez-Bringas]{Mar\'{\i}a Garc\'{\i}a Monera, Vinicio G\'omez Guti\'errez and \\
Federico S\'anchez-Bringas 
}
\address{M. Garc\'{\i}a Monera, Departamento de Did\'actica de la Matem\'atica, Universitat de Valencia, 46100 Burjassot, Valencia, Espa\~na.}
\email{monera2@uv.es}
\address{Vinicio G\'omez Guti\'errez, Facultad de Ciencias, UNAM, Ciudad Universitaria, c.p. 04510 M\'exico D.F., M\'exico.}
\email{vgomez@ciencias.unam.mx}
\address{Federico S\'anchez-Bringas, Facultad de Ciencias, UNAM, Ciudad Universitaria, c.p. 04510 M\'exico D.F., M\'exico.}
\email{sanchez@unam.mx}

\maketitle

\begin{abstract}
We describe the $\nu$-lines of curvature of an embedding of the double torus
into $\mathbb R^4$, defined as the link of the real part of the Milnor fibration of a
polynomial, where $\nu$ is its gradient. Through this analysis, we present a
complete description of the foliation of lines of curvature of the
embedding, defined as the image of the stereographic projection of this
link into $\mathbb R^3$.

\bigskip

\end{abstract}
\noindent {\small{\it MSC 2001: 53A05, 53C12, 57R30, 57R95}}

\noindent {\small{\it Keywords: curvature line; umbilic point; compact surface}}
\vspace{1cm}


\section{Introduction}


The lines of curvature of surfaces in $\mathbb R^3$ have been a subject of interest for several centuries, since G. Monge described 
the case of the triaxial ellipsoid by direct integration     
of the differential equation of these lines. This is the first example of a foliation with singularities  
on a compact surface \cite{Monge}. 
Despite the fact that many studies have focused on different properties of local and global nature of the lines of curvature of surfaces in $\mathbb R^3$, 
(see for instance \cite{Soto}, \cite{Bruce}, \cite{S-X}, \cite{Hopf}, \cite{Spi}),
a complete analytic description of them has not been achieved for the case of an oriented compact surface of genus higher than 
one. A reason for this may be that, contrasting with the ellipsoid case which is a quadric surface,   
embeddings of compact surfaces of higher genus are quartic or even higher degree algebraic surfaces, \cite{Griffits, Hirsch} and consequently, their  
lines of curvature are described by rather complicated differential equations.
The simplest case in this class of surfaces is the double torus. 

In the present article we present a description of the foliation of lines of curvature 
of an embbeding of the double torus into $\mathbb R^3$.   
In order to avoid the problem arising from the degree of the algebraic surface in $\mathbb R^3$
we consider an embedding of the double torus into $\mathbb R^4$ as the link of the real part of the Milnor fibration of a polynomial with isolated singularity at the origin.
Specifically, this surface is embedded into the sphere $\mathbb S_r^3 \subset \mathbb R^4$ of radius $r$ with center at the origin for which
we analyze the lines of curvature with respect to the gradient $\nu$, referred to in our research as $\nu$-lines of curvature, of this polynomial
on the link.   
   
The $\nu$-lines of curvature of surfaces, where $\nu$ is a normal vector field,
have been studied in $\mathbb R^4$ in \cite{F} and  \cite{Ga-S}, where the expression of the differential equation is given in the Monge chart, in which 
the surface is defined as the graph of a differentiable function. 
In this article, we introduce an equation of these lines according to the description of the surface as the intersection of the loci of two polynomials,
which simplifies the analysis. This equation, together with the symmetries of the link, allow us to apply elements from both algebraic geometry and dynamical systems
to describe the foliation completely. Then, we apply the stereographic projection which transforms the $\nu$-lines of curvature of this spherical embedding into the lines of curvature of the image of the link in $\mathbb R^3$.

The article is organized as follows: in section 2 we present some preliminaries and the formulation of the 
differential equation of $\nu$-lines of curvature that will be used in the study (Proposition \ref{3.2} and Corollary \ref{equationframe}). In section 3, we analyze the embbeding of the double torus into $\mathbb R^4$. Namely, we determine a decomposition of the surface
parameterized with convenient coordinate charts which allow us to use the symmetries of the surface to simplify the study. In section 4, we prove the main theorem 
which provides the description of the foliation of the $\nu$-lines of curvature on the double torus embbeded into $\mathbb R^4$ as a transversal intersection (Theorem \ref{Maintheorem}). 
As a corollary we get the description of the lines of curvature of the image of this embedding under the stereographic projection in $\mathbb R^3$ 
(Corollary \ref{torusinR3}).



\section{Preliminaries}


\subsection{The $\nu$-lines of curvature of a surface in $\R^4$.}

Let $M$ be a smooth oriented surface immersed in $\R^4$ with the Riemannian
metric
induced by the standard Riemannian metric of $\R^4$. For each $p \in M$
consider
the decomposition $T_p \R^4 =T_pM \oplus N_pM$, where $N_pM$ is the
orthogonal complement
of $T_pM$ in $\R^4$.
Let $\bar \nabla$ be the Riemannian connection of $\R^4$. Given local
vector fields
$X, \ Y$ on $M$, let $\bar X, \ \bar Y$ be local extensions to $\R^4$.
The tangent component of the Riemannian connection in $\R^4$ is the Riemannian
connection of $M: \nabla_XY=(\bar \nabla_{\bar X}\bar Y)^{\top}.$

Let ${\mathcal X}(M)$ and ${\mathcal N}(M)$ be the space of the smooth
vector fields tangent to $M$ and
the space of the smooth vector fields normal to $M$, respectively.
Consider the {\it second fundamental form},
$$II: {\mathcal X}(M)\times {\mathcal X}(M) \rightarrow {\mathcal N} M,
\ II(X,Y) = \bar \nabla_{\bar X}\bar Y - \nabla_XY.$$
This map is symmetric and bilinear. So, at each point $p$, $II(X,Y)$ only depends on 
$X(p)$ and $Y(p)$.

Suppose the $\nu$ is a unitary vector field in ${\mathcal N}(M)$. {\it The $\nu$-second fundamental form} of $M$ at $p$ is the quadratic form,
$$II_{\nu}: T_pM \rightarrow \R,\ II_{\nu}(X)= \langle II_p(X,X), \nu \rangle.$$

Recall the {\it $\nu$-shape operator}
$$S_{\nu}: T_pM \rightarrow T_pM,
\ S_{\nu}(X)=-(\bar \nabla_{\bar X}\bar \nu)^{\top} ,$$
\noindent where $\bar \nu$ is a local extension to $\R^4$ of the normal
vector field $\nu$
at $p$ and $\top$ means the tangent component. This operator is 
self-adjoint and, for
any $X,\ Y \in T_pM$, satisfies the following equation:
$$<S_{\nu}(X),Y>=<II(X,Y), \nu>.$$

Thus, for each $p \in M$, there exists an
orthonormal
basis of eigenvectors of $S_{\nu} \in T_pM$, for which the restriction of
the second
fundamental form to the unitary vectors, $II_{\nu}|_{S^1}$, takes its
maximal and
minimal values. The corresponding eigenvalues $k_1,\ k_2$ are the {\it maximal}
and {\it minimal $\nu$-principal curvatures}, respectively. The point $p$ is a
$\nu$-umbilic if the $\nu$-principal curvatures coincide.
Let ${\mathcal U}_{\nu}$ be the set of $\nu$-umbilics in $M$.
For any $p \in M \backslash {\mathcal U_{\nu}}$  there are two $\nu$-principal
directions
defined by the eigenvectors of $S_{\nu}$. These fields of directions are
smooth and
integrable, and therefore they define, in  $M \backslash {\mathcal U_{\nu}}$, two families of orthogonal curves, its integrals,
which are called the  {\it $\nu$-principal lines of curvature}, one
maximal and the other
one minimal. The $\nu$-umbilics are considered as the singularities of these foliations.
The differential equation of $\nu$-lines of curvature is
\begin{eqnarray*}
S_{\nu}(X(p))=\lambda(p)X(p)
\end{eqnarray*}


\subsection{Equation of $\nu$-lines of curvature of surfaces defined implicitly in 4-space}

We first determine a characterization of a $\nu$-principal direction at a point of a surface 
provided with a normal frame $\{\nu, \mu \}$.

\begin{prop}
Let $M$ be a surface immersed in $\mathbb R^4$.
Assume that $\{\nu, \mu \}$ is a frame of the normal
bundle on $M$.
Consider the unitary vector field $\tilde \nu= \frac{\nu}{||\nu||}$.
If $p \in M$, the vector $X \in T_pM$ is tangent to a $\tilde \nu$- principal direction, if and only if
the following equation holds at $p$:

\begin{eqnarray}\label{equation1}
\langle \bar \nabla_X \nu\wedge X \wedge \mu,\ \nu \rangle=0.
\end{eqnarray}

\end{prop}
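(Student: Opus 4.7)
The plan is to reduce the wedge-product equation to a statement about the tangential component of $\bar\nabla_X\nu$ being parallel to $X$, and then to translate that back to the eigenvector condition for $S_{\tilde\nu}$.

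First I would unfold the shape operator condition. By definition, $X$ is a $\tilde\nu$-principal direction at $p$ iff $S_{\tilde\nu}(X)=\lambda X$ for some $\lambda\in\mathbb{R}$, iff $(\bar\nabla_X\tilde\nu)^\top$ is proportional to $X$. A short computation using $\tilde\nu=\nu/\|\nu\|$ gives
\begin{equation*}
\bar\nabla_X\tilde\nu=\frac{1}{\|\nu\|}\bar\nabla_X\nu-\frac{X(\|\nu\|)}{\|\nu\|^{2}}\,\nu,
\end{equation*}
and since $\nu$ is normal to $M$, taking tangential parts yields $(\bar\nabla_X\tilde\nu)^\top=\frac{1}{\|\nu\|}(\bar\nabla_X\nu)^\top$. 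Thus $X$ is $\tilde\nu$-principal if and only if $(\bar\nabla_X\nu)^\top$ and $X$ are linearly dependent in $T_pM$.

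Next I would interpret the wedge-product scalar as a determinant. Working in an oriented orthonormal basis of $\mathbb{R}^4$, the pairing between a $3$-vector and a $1$-vector gives
\begin{equation*}
\langle \bar\nabla_X\nu\wedge X\wedge\mu,\,\nu\rangle=\det\bigl[\bar\nabla_X\nu,\,X,\,\mu,\,\nu\bigr],
\end{equation*}
the $4\times 4$ determinant of the matrix whose columns are those four vectors. Decomposing $\bar\nabla_X\nu=(\bar\nabla_X\nu)^\top+(\bar\nabla_X\nu)^\perp$, and noticing that $(\bar\nabla_X\nu)^\perp$ is a linear combination of $\nu$ and $\mu$ which are already present as columns, this normal contribution cancels. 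Hence
\begin{equation*}
\langle \bar\nabla_X\nu\wedge X\wedge\mu,\,\nu\rangle=\det\bigl[(\bar\nabla_X\nu)^\top,\,X,\,\mu,\,\nu\bigr].
\end{equation*}

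Finally I would conclude by a dimension argument. Since $\{\nu,\mu\}$ is a basis of the (two-dimensional) normal space $N_pM$ and $(\bar\nabla_X\nu)^\top,\,X$ lie in the (two-dimensional) tangent space $T_pM$, the four columns span $T_p\mathbb{R}^4$ precisely when $(\bar\nabla_X\nu)^\top$ and $X$ are linearly independent in $T_pM$. Therefore the determinant vanishes if and only if $(\bar\nabla_X\nu)^\top$ is parallel to $X$, which by the first step is exactly the condition that $X$ is a $\tilde\nu$-principal direction. The main technical point, and the only place one has to be careful, is the identification of $\langle\cdot\wedge\cdot\wedge\cdot,\cdot\rangle$ with the oriented $4\times 4$ determinant in $\mathbb{R}^{4}$; once that is fixed, the rest is a clean algebraic reduction.
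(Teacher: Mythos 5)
Your proof is correct and follows essentially the same route as the paper: decompose $\bar\nabla_X\nu$ into tangential and normal parts, observe that the normal contribution is annihilated because it is a combination of $\mu$ and $\nu$ which already appear in the wedge/inner product, and reduce to the linear dependence of $(\bar\nabla_X\nu)^\top$ and $X$, using $(\bar\nabla_X\tilde\nu)^\top=\frac{1}{\|\nu\|}(\bar\nabla_X\nu)^\top$ to pass to the normalized field. The only difference is cosmetic: you make explicit the identification of $\langle\cdot\wedge\cdot\wedge\cdot,\cdot\rangle$ with a $4\times4$ determinant and justify the final equivalence by a dimension count, a step the paper simply asserts.
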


\noindent{\bf Proof.} We decompose the covariant derivative as the sum of the following
vector fields:
$$\bar \nabla_X \nu=(\bar \nabla_X\nu)^{\top}+p_{\mu}(X)\mu + p_{\nu}(X)\nu,$$
\noindent where  $p_{\mu},\ p_{\nu}: T_pM \rightarrow \R$ are the projections of
$\bar \nabla_X \nu$ on the lines determined by the normal frame $\{\nu, \mu\}$ at $p$.

Therefore,
\begin{eqnarray}
\langle \bar \nabla_X \nu\wedge X \wedge \mu, \nu \rangle & = & \langle (\bar \nabla_{X} \nu)^{\top} +
p_{\mu}(X)\mu + p_{\nu}(X)\nu\wedge X \wedge \mu, \nu \rangle \nonumber \\
& = & \langle (\bar \nabla_{X} \nu)^{\top} \wedge X \wedge \mu, \nu \rangle.
\end{eqnarray}

Observe that the last expression vanishes if and only if $(\bar \nabla_{X} \nu)^{\top}$ and $X$ are
linearly dependent.

If $\nu$ is unitary, $-(\bar \nabla_{X} \nu)^{\top}=S_{\nu}(X)$ is the $\nu$-shape operator. Then, equation $(\ref{equation1})$ holds
if and only if $S_{\nu}(X)$ and $X$ are linearly dependent; this is equivalent to the fact that $X$ must be an eigenvector of the $\nu$-shape operator.

On the other hand, if $\nu$ is not unitary, then,

\begin{eqnarray*}
(\bar \nabla_{X} \tilde \nu)^{\top} & = & \left(\bar \nabla_{X} \frac{\nu}{||\nu||} \right)^{\top} \\
& = & \left (X \left(\frac{1}{||\nu||}\right)\nu + \frac{1}{||\nu||}(\bar \nabla_{X}\nu) \right)^{\top} \\
& = & \frac{1}{||\nu||}\left(\bar \nabla_{X}\nu \right)^{\top}.
\end{eqnarray*}

\noindent Therefore, from this equation we have
\begin{eqnarray*}
\langle \bar \nabla_X \nu\wedge X \wedge \mu, \nu \rangle & = &
\langle (\bar \nabla_{X} \nu)^{\top} \wedge X \wedge \mu, \nu \rangle \\
& = &- \langle ||\nu || S_{\tilde \nu}(X) \wedge X \wedge \mu, \nu \rangle.
\end{eqnarray*}

\noindent Thus, we conclude that equation $(\ref{equation1})$ holds if and only if $X$ is an eigenvector of the $\tilde \nu$- shape operator. $\hfill\Box$


\begin{rem}
Observe that this proposition allows us to consider any non-unitary normal vector field $\nu$ on $M$ in order
to define the $\tilde \nu$- principal directions of curvature. 
This fact will be used in the setting when we consider non-unitary gradient vector fields to define the lines of curvature.
\end{rem}

Consider now a surface $M$ defined as the transversal intersection of the inverse images of regular values of two differentiable functions,
there is a natural frame of the normal bundle of $M$, given by the gradient vector fields
of both functions. Thus, $M$ is endowed with a natural pair of lines of curvature.
A direct computation that uses the multilinearity properties of the left hand side of $(\ref{equation1})$
provides the following expression for the equation of these lines of curvature:

\begin{prop}\label{3.2}
Let $F,G:\mathbb R^4 \rightarrow \mathbb R$ be a pair of differentiable functions. Suppose that $M$ is a surface defined as the transversal intersection of
the inverse image of two regular values of these functions.
Let $\nu$ and $\mu$ be the gradient vector fields of $F$ and $G$, respectively.
Then, the differential equation of the $\nu$-lines of curvature of $M$ is given by:

\begin{eqnarray}
\left( dx_{1} ,\ dx_{2},\  dx_{3},\ dx_{4} \right)
\left(\begin{array} {cccc}
    \Omega_{11} & \Omega_{12} & \Omega_{13} & \Omega_{14}  \\
     \Omega_{12} & \Omega_{22} & \Omega_{23} & \Omega_{24} \\
      \Omega_{13} & \Omega_{23} & \Omega_{33} & \Omega_{34} \\
\Omega_{14} & \Omega_{24} & \Omega_{34} & \Omega_{44} \\
\end{array} \right)\left(\begin{array}{c} dx_1 \\ dx_2 \\ dx_3 \\ dx_4 \end{array}\right)=0, \quad
\label{curvaturelines}
\end{eqnarray}

\noindent where $\{x_1,x_2,x_3,x_4\}$ is the system of parameters of $\mathbb R^4$ and 


  $$
\Omega_{ij}= \frac{1}{2}\left(\langle \bar \nabla_{\frac{\partial}{\partial x_i}} \nu
\wedge \frac{\partial}{\partial x_j}\wedge \mu, \nu \rangle +
\langle \bar \nabla_{\frac{\partial}{\partial x_j}}
\wedge \frac{\partial}{\partial x_i} \wedge \mu, \nu \rangle\right).
  $$
\end{prop}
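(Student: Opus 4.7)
The plan is to invoke the preceding proposition, which characterizes a $\tilde\nu$-principal direction $X\in T_pM$ by the single scalar equation $\langle\bar\nabla_X\nu\wedge X\wedge\mu,\nu\rangle=0$, and then rewrite this condition in the Cartesian coordinates $(x_1,\ldots,x_4)$ of $\mathbb{R}^4$, using nothing more than the multilinearity of the wedge-inner product and of the ambient connection.

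First I would note that in $\mathbb{R}^4$ the expression $\langle u\wedge v\wedge w,z\rangle$ is the determinant $\det(u,v,w,z)$, hence multilinear in all four arguments. Writing $X=\sum_i dx_i\,\partial/\partial x_i$ and using that the standard Euclidean connection $\bar\nabla$ is tensorial in its direction slot, one has $\bar\nabla_X\nu=\sum_i dx_i\,\bar\nabla_{\partial/\partial x_i}\nu$. Expanding both occurrences of $X$ simultaneously then gives
\[
\langle\bar\nabla_X\nu\wedge X\wedge\mu,\nu\rangle=\sum_{i,j=1}^{4}dx_i\,dx_j\,\langle\bar\nabla_{\partial/\partial x_i}\nu\wedge\partial/\partial x_j\wedge\mu,\nu\rangle.
\]
Because the coefficient $dx_i\,dx_j$ is symmetric in $(i,j)$, only the symmetric part of the bracketed scalar contributes; replacing it by its symmetrization $\Omega_{ij}$ as defined in the statement yields exactly the matrix equation (\ref{curvaturelines}).

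The argument is essentially algebraic bookkeeping, and I do not anticipate a serious obstacle. Two subtle points are worth flagging. First, by the preceding remark, using the unnormalized gradients $\nu=\nabla F$ and $\mu=\nabla G$ produces the same lines of curvature as the corresponding unit normals, which is the computational point of the formulation. Second, the matrix identity (\ref{curvaturelines}) is written for arbitrary $(dx_1,\ldots,dx_4)\in\mathbb{R}^4$, but it is to be read in conjunction with the tangency conditions $\langle X,\nu\rangle=\langle X,\mu\rangle=0$ that confine $X$ to the two-dimensional plane $T_pM$, where the $\tilde\nu$-principal directions actually live.
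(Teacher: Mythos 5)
Your proposal is correct and follows exactly the route the paper intends: the paper offers no explicit proof, merely noting that the formula follows by ``a direct computation that uses the multilinearity properties of the left hand side of (\ref{equation1})'', which is precisely your expansion of $X=\sum_i dx_i\,\partial/\partial x_i$ via tensoriality of $\bar\nabla$ in the direction slot and symmetrization of the coefficients. Your two flagged subtleties (the preceding remark licensing non-unit gradients, and the tangency constraints restricting $(dx_1,\dots,dx_4)$ to $T_pM$) are both apt and consistent with the paper's setup.
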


\begin{rem}
This proposition still holds if $p$, $q$ or both are singular values of any of the two functions, but only if $M= F^{-1}(p) \cap G^{-1}(q)$ 
lies in the complement of the 
corresponding critical points. The other hypothesis remain under assumption. 
\end{rem}

In case we have a frame of the tangent bundle of $M$, equation $( \ref{curvaturelines})$ has a simpler expression.

\begin{coro}\label{equationframe}
Assume that $V_1, V_2$ is a frame of the tangent bundle of $M$. Then, the expression of the differential equation $( \ref{curvaturelines})$ is:
\begin{eqnarray}
l_1^2 \Omega_{11} + l_1 l_2 \left( \Omega_{12} +  \Omega_{21} \right)+ l_2^2  \Omega_{22}=0
\end{eqnarray}

\noindent where $\Omega_{ij}=\langle \nabla_{V_i} \nu \wedge V_j \wedge \mu, \nu \rangle, \ \ i,j \in \{ 1,2\} $
and $l_i$ is the $i$-component of the vectorial solution $X$ of the equation, namely: $X= l_1V_1 + l_2V_2$.
\end{coro}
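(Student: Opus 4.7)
The plan is to derive the stated equation directly from the basic characterization in equation $(\ref{equation1})$, namely
\[
\langle \bar\nabla_X \nu \wedge X \wedge \mu,\ \nu \rangle = 0,
\]
by substituting the tangent frame decomposition $X = l_1 V_1 + l_2 V_2$ and exploiting multilinearity. This is the same linearization strategy that gave Proposition \ref{3.2}, but carried out in the frame $\{V_1, V_2\}$ of $TM$ rather than in the ambient coordinate basis $\{\partial/\partial x_i\}$.

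First, I would use linearity of the connection in the direction argument to write
\[
\bar\nabla_X \nu = l_1\, \bar\nabla_{V_1} \nu + l_2\, \bar\nabla_{V_2} \nu.
\]
Then, plugging both this expression and $X = l_1 V_1 + l_2 V_2$ into the triple wedge product $\bar\nabla_X \nu \wedge X \wedge \mu$, and expanding by the trilinearity of $\wedge$, the result splits into four terms indexed by pairs $(i,j)\in\{1,2\}^2$, whose coefficients are $l_il_j$ and whose wedge factors are $\bar\nabla_{V_i}\nu \wedge V_j \wedge \mu$. Finally, taking inner product with $\nu$ and using the definition $\Omega_{ij} = \langle \bar\nabla_{V_i}\nu \wedge V_j \wedge \mu,\ \nu\rangle$, the four terms collapse to $l_1^2\Omega_{11} + l_1 l_2 \Omega_{12} + l_1 l_2 \Omega_{21} + l_2^2 \Omega_{22}$, which is the desired equation.

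There is essentially no obstacle: every step is a routine application of bi/tri-linearity. The one point to watch is that the cross terms are \emph{not} symmetrized into a single coefficient $\tfrac12(\Omega_{12}+\Omega_{21})$ as in Proposition \ref{3.2}, because here we are expanding a scalar quadratic form in $(l_1,l_2)$ rather than writing a symmetric matrix; the off-diagonal entries combine naturally as the sum $\Omega_{12}+\Omega_{21}$ multiplying $l_1 l_2$. Equivalence with $(\ref{curvaturelines})$ is then transparent, since the two formulations differ only by the choice of basis in which the same bilinear expression is written.
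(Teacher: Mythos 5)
Your proposal is correct and is exactly the argument the paper intends: the corollary is stated without proof precisely because it follows from the tensoriality of $\bar\nabla_{(\cdot)}\nu$ in the direction argument and the multilinearity of the wedge product and inner product, which is the same ``direct computation that uses the multilinearity properties of the left hand side of $(\ref{equation1})$'' invoked for Proposition \ref{3.2}. Your observation that the cross terms appear as the unsymmetrized sum $\Omega_{12}+\Omega_{21}$ multiplying $l_1l_2$ is also the right reading of the statement.
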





\section{An embbeding of the double torus into $\mathbb R^4$}


Let us analyze a family of examples where the $\nu$-lines of curvature can be described using the 
expressions presented above.  In \cite{Seade},
J. Seade studied the topology of a class of surfaces obtained as the projections of certain complex hypersurfaces
onto a real line through the origin of $\mathbb C$. Let us consider an element of that class:

  $$
\bar F:\mathbb C^2 \rightarrow \mathbb C,\ \ \bar F(z_1,z_2)= z_1^p + z_2^q,\ \ 1< p,q \in \mathbb N.
  $$

\noindent The hypersurface defined by the real part of $\bar F:\ Re\ \bar F= z_1^p+ z_2^q+ \bar z_1^p + \bar z_2^q$
is diffeomorphic to the cone over the link $L_r= Re\ \bar F^{-1}(0)\cap \mathbb S^3_{r},\ r>0$. Moreover, 
$L_r$ is a closed oriented surface of genus $(p-1)(q-1)$
in $\mathbb S^3_r \subset \mathbb R^4$. 
For instance, consider the case $p=2,q=3$ in which $L_r$ is a double torus in $\mathbb S^3_r \subset \mathbb R^4$. 
We provide an elementary proof of this fact in the following proposition. 

A direct computation
with the help of the standard identification between $\mathbb C^2$ and $\mathbb R^4$ given by
  $$
(z_1=x+iy,z_2=u+iv) \longmapsto (x,y,u,v)
  $$
\noindent shows that $Re\ \bar F$ has the following expression as a function defined in $\mathbb R^4$:

\begin{eqnarray}\label{FF}
F:\mathbb R^4 \rightarrow \mathbb R,\ \  F(x,y,u,v)=x^2- y^2 + u^3 - 3 u v^2.
\end{eqnarray}

\noindent Therefore, if we consider the function

\begin{eqnarray}\label{GG}
G_{r}:\mathbb R^4 \rightarrow \mathbb R,\ \ G_{r}(x,y,u,v)=x^2+ y^2 + u^2 + v^2 - r^2,
\end{eqnarray}

\noindent for any positive $r \in \mathbb R$, the intersection $F^{-1}(0) \cap G_{r}^{-1}(0)$ is the link $L_r$. Let us denote it by $T_{r}(2)$.

The transversality of the intersection is determined in terms of the gradient vector fields. Namely, 
let $F,G:\mathbb R^{4} \rightarrow \mathbb R$ be a pair of differentiable functions. Assume that 
$ p \in F^{-1}(a) \cap G^{-1}(b)$ is a regular point for both functions. The intersection of $ F^{-1}(a)$ and $G^{-1}(b)$
is transversal at $p$ if and only if the gradient vectors $\mu$ and $\nu$ of these functions are linearly independent at $p$. Thus, we state the following:


\begin{prop}
The intersection $T_{r}(2)=F^{-1}(0) \cap G_{r}^{-1}(0), \  r>0$ is a smooth surface. 
\end{prop}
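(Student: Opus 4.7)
The plan is to apply the transversality criterion stated immediately before the proposition: $T_r(2)$ is smooth provided that at each $p \in T_r(2)$, both functions are regular and their gradients are linearly independent. First I would compute
\[
\nabla F = (2x,\ -2y,\ 3(u^2-v^2),\ -6uv), \qquad \nabla G_r = 2(x,\ y,\ u,\ v).
\]
A quick inspection shows $\nabla G_r = 0$ only at the origin, which is not in $G_r^{-1}(0)$ since $r>0$, and $\nabla F = 0$ forces $x=y=0$ together with $u^2=v^2$ and $uv=0$, hence $u=v=0$, again excluding the origin. So both functions are regular on $T_r(2)$.

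Next I would assume toward a contradiction that $\nabla F(p) = \lambda\,\nabla G_r(p)$ for some $\lambda \in \mathbb{R}$ at a point $p = (x,y,u,v) \in T_r(2)$. Matching the first two coordinates gives $(1-\lambda)x = 0$ and $(1+\lambda)y = 0$, so either $\lambda = 1$ (forcing $y=0$), or $\lambda = -1$ (forcing $x=0$), or $x = y = 0$ (in which case only the last two coordinate equations remain).

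I would then dispatch the three cases by elementary algebra combined with the constraints $F(p)=0$ and $G_r(p)=r^2$. For $\lambda = 1$ with $y=0$, the $v$-equation yields $v(3u+1)=0$: either $v=0$ and then the $u$-equation gives $u\in\{0,2/3\}$, which substituted into $F(x,0,u,0)=x^2+u^3=0$ is impossible (it gives $x^2 = -8/27$ or $x^2 = 0$ with $G_r = -r^2 \ne 0$); or $u=-1/3$, in which case the $u$-equation forces $v^2=1/3$, and then $F = x^2 + 8/27 > 0$, again impossible. The case $\lambda = -1$ is symmetric and handled analogously. Finally, when $x=y=0$, the vectors $\nabla F$ and $\nabla G_r$ reduce to vectors in the $(u,v)$-plane, and linear dependence is equivalent to vanishing of the determinant $6v(3u^2-v^2)$; this forces either $v=0$, whence $F = u^3 = 0$ and $G_r = -r^2$, or $v^2 = 3u^2$, whence $F = -8u^3 = 0$ and again $G_r = -r^2$, both impossible.

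The computation is elementary but the main obstacle is simply the multi-layered case analysis; one has to be systematic in pairing the Lagrange-multiplier conditions with the defining equations, because in several branches linear dependence of the gradients is compatible with the two Lagrange equations alone and is ruled out only once $F=0$ and $G_r = r^2$ are invoked simultaneously.
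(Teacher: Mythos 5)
Your proposal is correct and follows essentially the same route as the paper: both proofs reduce smoothness to linear independence of $\nabla F$ and $\nabla G_r$ along $T_r(2)$ and rule out proportionality by a case analysis that plays the two Lagrange-type coordinate equations against the constraints $F=0$ and $G_r=0$. Your version is organized by the value of the multiplier $\lambda$ rather than by whether $x\neq 0$, and is a bit more systematic in the degenerate sub-cases (e.g.\ $u=0$ and $x=y=0$), but there is no substantive difference in method.
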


\noindent
{\bf Proof.}  We prove that the inverse images of zero $F^{-1}(0)$ and $G_{r}^{-1}(0)$  intersect transversally.
A direct computation shows that the origin is the unique singular point of both functions, $F$ and $G$. Since it does not lie in $G_{r}^{-1}(0)\ ,\ r>0,$ 
then it lies off $T_{r}(2)$. 
Now, we show that the gradient vector fields $\mu(x,y,u,v)= (2x,-2y, 3u^2-3v^2,-6uv)$
and $\nu(x,y,u,v)=2(x,y,u,v)$ are linearly independent along $F^{-1}(0) \cap G_r^{-1}(0)$, if $r >0$.
To do so, we consider the equation:

\begin{eqnarray}
(2x,-2y, 3u^2-3v^2,-6uv) = \lambda(x,y,u,v),\ \lambda \in \mathbb{R}\setminus\{ 0 \} \label{5}
\end{eqnarray}

\noindent Suppose that $x \neq 0$. Then, $\lambda = 2$ and $y=0$. This implies
that $2v=-6uv$. Consequently, 
if $v \neq 0$, then $u= -\frac{1}{3}$ and the relation of the third coordinates above
implies that $2u= 3u^2 - 3v^2$, namely, $v^2=\frac{1}{3}$. By assuming that $G_{r}$ vanishes at $(x,0,-\frac{1}{3},\frac{1}{\pm \sqrt{3}})$, 
we get $x^2=r^2- \frac{4}{9}$. By supposing that $F$ vanishes at this point, 
we get $x^2<0$ which is a contradiction.  So, if $x \neq 0$ then $v=0$ and $y=0$. 
This implies that $u=\frac{2}{3}$.
By evaluating $G_r$ at a point of the form $(x,0,\frac{2}{3},0)$, we get again a contradiction
.
Therefore, $x=0$. Analogous considerations derived from assuming that points of the form $(0,y,u,v)$ are in the intersection of the loci of $G_r$ and $F$ provide 
straightforward contradictions
. Therefore, the gradient vector fields are linearly independent at any point of $T_r(2)$.
$\hfill\Box$

Now, we consider the symmetries of $T_{r}(2)$.
It is straightforward to verify that this surface is invariant under the action of the group
$\mathcal G$ generated by the following applications:
 
$$\Gamma_i: \mathbb C^2 \rightarrow \mathbb C^2,\ \ \ i=1,2,3,$$  
 $\Gamma_1(z_1,z_2)=(\bar z_1, z_2)$, $\Gamma_2(z_1,z_2)=(-\bar z_1, z_2)$ and $\Gamma_3(z_1,z_2)=(z_1, e^{\frac{2\pi i}{3}} z_2)$.

\medskip

Let us describe this embedding of the double torus. 


From the very definition of the functions $F$ and $G$ we have   
four charts of $T_r(2)$ given by the projection onto the $uv$-plane of the following hexagonal regions. 

\begin{eqnarray}\label{regions}
H_{++} & = & \lbrace (x,y,u,v) \in T_r(2) : x \geq 0, y \geq 0 \rbrace,  \nonumber \\ 
H_{+-} & = & \lbrace (x,y,u,v) \in T_r(2) : x \geq 0, y \leq 0 \rbrace, \\
H_{-+} & = & \lbrace (x,y,u,v) \in T_r(2) : x \leq 0, y \geq 0 \rbrace, \nonumber \\
H_{--} & = & \lbrace (x,y,u,v) \in T_r(2) : x \leq 0, y \leq 0 \rbrace. \nonumber
\end{eqnarray}

We consider these sets with boundary, since it will be useful to identify corresponding boundaries in the construction below.
The projection $\mathbb{R}^{4} \rightarrow \mathbb{R}^{2}$
\[ (x,y,u,v) \mapsto (u,v) \] defines the hexagon $H_{uv}$ on the $uv$-plane as follows. 

\[ H_{uv}=\lbrace (u,v) \in \mathbb{R}^{2} \quad :  \quad 
r^{2}-u^{3}+3uv^{2}-u^{2}-v^{2} \geq 0, \quad r^{2}+u^{3}-3uv^{2}-u^{2}-v^{2} \geq 0 \rbrace. \]

Thus, we consider the coordinate functions 

$$\varphi_{\pm  \pm}=\left(\pm \frac{\sqrt{r^2-u^2-u^3-v^2+3 u v^2}}{\sqrt{2}}, \pm \frac{\sqrt{r^2- u^2 + u^3- v^2 - 3 u v^2}}{\sqrt{2}},u, v \right),$$
where the domain of these applications is $H_{uv}$. 
 
\begin{figure}[htb]
\begin{center}
\includegraphics[width=7cm]{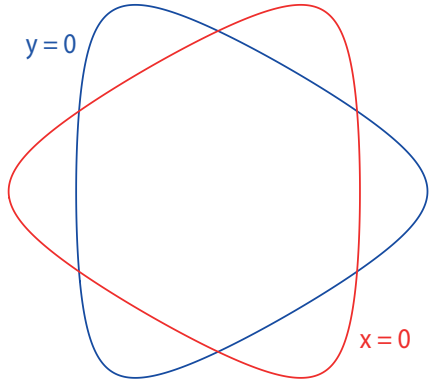}
\end{center}
\caption{Projection of the double torus on the $uv$-plane.}
\label{Hexagon}
\end{figure}

The boundary of $H_{uv}$ is constituted by 
\[ \partial H_{uv} = X^{1} \cup Y^{1} \cup X^{2} \cup Y^{2} \cup X^{3} \cup Y^{3}, \]
where

\begin{eqnarray*}
X^{1} & = & \lbrace (u,v) \quad : \quad r^{2}-u^{3}+3uv^{2}-u^{2}-v^{2}=0, \quad u \geq 0 \rbrace, \\
Y^{1} & = & \lbrace (u,v) \quad : \quad r^{2}+u^{3}-3uv^{2}-u^{2}-v^{2}=0, \quad u \geq 0, \quad v \geq 0 \rbrace, \\
X^{2} & = & \lbrace (u,v) \quad : \quad r^{2}-u^{3}+3uv^{2}-u^{2}-v^{2}=0, \quad u \leq 0, \quad v \geq 0 \rbrace, \\
Y^{2} & = & \lbrace (u,v) \quad : \quad r^{2}+u^{3}-3uv^{2}-u^{2}-v^{2}=0, \quad u \leq 0 \rbrace, \\
X^{3} & = & \lbrace (u,v) \quad : \quad r^{2}-u^{3}+3uv^{2}-u^{2}-v^{2}=0, \quad u \leq 0, \quad v \leq 0 \rbrace, \\
Y^{3} & = &\lbrace (u,v) \quad : \quad r^{2}+u^{3}-3uv^{2}-u^{2}-v^{2}=0, \quad u \geq 0, \quad v \leq 0 \rbrace.
\end{eqnarray*}



\noindent Observe that

\[ T(2)_+ =  T_r(2) \cap \lbrace x \geq 0 \rbrace = H_{++} \cup H_{+-} \]
\[ T(2)_- =  T_r(2) \cap \lbrace x \leq 0 \rbrace = H_{-+} \cup H_{--}. \]

Therefore, $T(2)_+$ is constructed by gluing the closure of two hexagonal regions through the identification of the curves defined by the equation $y=0$. 
That is, $T(2)_+$ is a surface homeomorphic to the sphere minus tree discs (see figure $2$).

\begin{figure}[htb]
\begin{center}
\includegraphics[width=7cm]{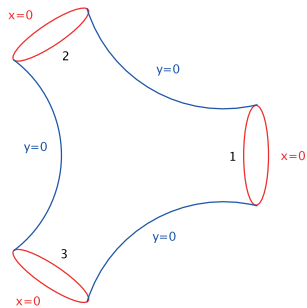}
\end{center}
\caption{First identification.}
\label{Esferatreshoyos}
\end{figure}

Since
$T(2)_-$ is the image of $ T(2)_+$ by the reflection with respect to the hyperplane $x=0$, we obtain
$T(2)$ by gluing two surfaces of this type along the boundary of the missing discs. 

\begin{figure}[htb]
\begin{center}
\includegraphics[width=7cm]{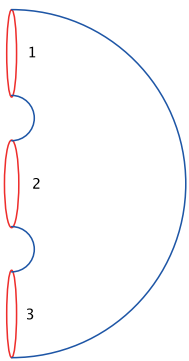}
\end{center}
\caption{A half of a double torus.}
\label{Hexagon}
\end{figure}

\begin{rem}\label{conjugaciontraslacion}
Let us denote by $T_{\pm \pm}: \mathbb C^2 \rightarrow \mathbb C^2$, the translation of the origin to any of the points $\varphi_{\pm  \pm}(0,0)$, respectively.
Then, the conjugation $T_{\pm \pm}^{-1}\circ g  \circ T_{\pm \pm}$, where $g$ belongs to the group generated by $\Gamma_3$, leaves $T(2)$ invariant.  
\end{rem}

We describe the action of $\mathcal G$ on $T_r(2)$. Each point in the interior of $H_{uv}$ represents 4 points identified by the action of the 
subgroup of reflections of $\mathcal G$ on the torus. 
We denote by $I$ the set of the intersections of the curves $X_i$ and $Y_j$, $i,j = 1,2,3.$
Each point in the boundary $H_{uv}\setminus I $ represents 2 points, while, points in $I$ represent only one point of $T_r(2)$. 

Observe that it is enough to determine this region only in $H_{++}$, since 
any of the hexagonal regions described in $(\ref{regions})$ are obtained from this one by one reflection.

We analyze the action of $\Gamma_3$ on the closure of the hexagonal region $H_{++}$. So,
consider the following subset of $H_{++}$
\begin{eqnarray}\label{pentagono}
P_{++} = \lbrace (x,y,u,v) \in H_{++} | v \geq 0, v \geq - \sqrt{3} u \rbrace. 
\end{eqnarray}
This subset is the closure of a pentagonal region.
Let $P_0$ be the image of the projection of $P_{++}$ in the plane $uv$ (see figure \ref{Pentagon}). We describe the boundary of $P_0$ as follows:
The line segment which goes from the origin to the midpoint of $X^{1}$ is denoted by $e_{1}$;
the upper half of $X^{1}$ is denoted by $e_{2}$; the edge $Y^{1}$ is denoted by $e_{3}$;
the upper half of $X^{2}$ is denoted by $e_{4}$ and finally; 
$e_{5}$ denotes the line segment which goes from the midpoint of $X^{2}$. 

\begin{figure}[htb]
\begin{center}
\includegraphics[width=7cm]{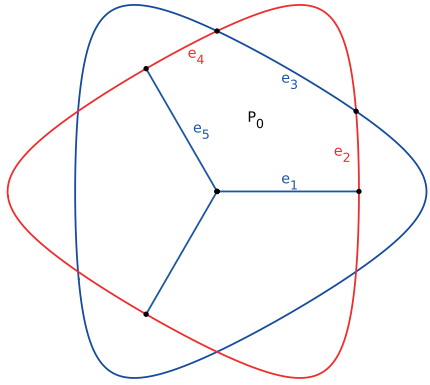}
\end{center}
\caption{A pentagonal fundamental region}
\label{Pentagon}
\end{figure}

If we rotate this pentagon $P_{0}$ by $2\pi/3 $ we obtain another pentagon $P_{1}$, and if we rotate $P_{1}$ we obtain a third pentagon $P_{2}$. 
The closure of the projection of $H_{++}$ is the union of closure of these three pentagons. We can define analogously, pentagonal regions in 
$P_{-+},\ P_{+-}$ and $P_{--}$ contained in the other hexagonal regions.



\section{Lines of curvature of the double torus} 

\subsection{The lines of curvature of the double torus in $\mathbb R^4$}

We consider the unitary vector field 
  
$$ \nu= \frac{F}{||F||},\ {\rm where}\ 
\nabla F=(2 x, -2 y, 3 u^2 - 3 v^2, -6 u v)
  $$
is the gradient vector field of $F$
on $T_{r}(2)$.

Proposition \ref{3.2} provides the differential equation of $\nu$-lines of curvature
of the double torus $T_r(2)$ immersed in $\mathbb S^3 \subset \mathbb R^4$.

\begin{coro}\label{EcuaDiff}
The differential equation of the $\nu$-lines of curvature of $T_r(2)$ has the following expression:
{\small
\begin{eqnarray}
dX
\left(\begin{array} {cccc}
    0 & 6v(v^2-3u^2) & -yv(2-18u+9\rho^2) & y(9(u^2-v^2)+u(2+9\rho^2))  \\
     * & 0 & xv(2+18u+9\rho^2) & x(9(u^2-v^2)-u(2+9\rho^2)) \\
      * & * & -24xyv & -24xyu \\
* & * & * & 24xyv\\
\end{array} \right)dX^T=0, \quad
\end{eqnarray}}

\noindent where $dX=\left( dx ,\ dy,\  du,\ dv \right)$, $\rho^2=u^2+v^2$ and the
values of $*$ are determined by the symmetry of the matrix.

\end{coro}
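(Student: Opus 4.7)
The plan is to apply Proposition \ref{3.2} directly. With $F$ and $G_r$ as in (\ref{FF})--(\ref{GG}), the normal frame along $T_r(2)$ is
\[
\nu=\nabla F=(2x,-2y,3u^2-3v^2,-6uv),\qquad \mu=\nabla G_r=(2x,2y,2u,2v),
\]
and in the ambient Euclidean chart each covariant derivative $\bar\nabla_{\partial_{x_i}}\nu$ is just the componentwise partial derivative of $\nu$. The bracket $\langle A\wedge B\wedge C,D\rangle$ is (up to a fixed normalization constant) the $4\times 4$ determinant $\det[A,B,C,D]$, so computing the matrix reduces to evaluating ten such determinants and then carrying out the symmetrization prescribed by the proposition.

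First I would record the four partials
\[
\partial_x\nu=2\,\partial_x,\quad \partial_y\nu=-2\,\partial_y,\quad \partial_u\nu=(0,0,6u,-6v),\quad \partial_v\nu=(0,0,-6v,-6u).
\]
The first two immediately give $\Omega_{11}=\Omega_{22}=0$, because in those determinants the rows $\partial_{x_i}\nu$ and $\partial_{x_i}$ are proportional. For each remaining entry I would form the $4\times 4$ matrix whose rows are $\partial_{x_i}\nu,\partial_{x_j},\mu,\nu$ and exploit the sparsity: $\partial_{x_j}$ has a single nonzero coordinate, and the first two partials of $\nu$ are supported in only two columns. A $2\times 2$ block Laplace expansion then turns each determinant into a sum of products of two $2\times 2$ minors, one pair of columns lying in the $(x,y)$-block and the complementary pair in the $(u,v)$-block. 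For example, $\Omega_{33}$ collapses to the single product $6v\cdot(-8xy)$, producing the entry $-24xyv$ after normalization; the cross entries $\Omega_{13},\Omega_{14},\Omega_{23},\Omega_{24}$ receive two or three such contributions, and the combinations $9(u^2-v^2)$ and $9\rho^2+2$ emerge by grouping the $3u^2-3v^2$ and $-6uv$ terms from $\nu$ with the $2u,2v$ terms from $\mu$.

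The only obstacle is the bookkeeping: tracking the signs of the block Laplace expansion, implementing the symmetrization $\Omega_{ij}=\frac{1}{2}(a_{ij}+a_{ji})$ on the cross terms, and recognizing the compact form $\rho^2=u^2+v^2$ at the end. The resulting identity is a tensor identity on $\mathbb R^4$, so the defining equations $F=0$ and $G_r=0$ are not used in the derivation of the matrix entries themselves; the surface $T_r(2)$ enters only through the requirement that the equation be evaluated at its points.
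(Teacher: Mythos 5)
Your proposal is correct and is exactly what the paper does: the corollary is stated as a direct consequence of Proposition \ref{3.2} with $\nu=\nabla F$, $\mu=\nabla G_r$, the flat connection reducing $\bar\nabla_{\partial_{x_i}}\nu$ to componentwise partials, and the entries obtained as symmetrized $4\times 4$ determinants (up to an irrelevant overall scalar). I verified several entries (e.g.\ $\Omega_{12}$, $\Omega_{13}$, $\Omega_{14}$, $\Omega_{33}$) and your block-Laplace bookkeeping reproduces the stated matrix.
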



We provide the description of the foliation of $\nu$-lines of curvature of the double torus
$T_r(2),\ r>0$.
As is common, we consider {\it the index of an isolated $\nu$-umbilic point} as the index of an isolated singularity 
of the field of principal directions on the surface, \cite{Hopf}.  
According to the classification provided in \cite{Soto}, (see also \cite{Bruce}) 
we refer to a {\it $\nu$-umbilic point of type $D_3$} as a generic $\nu$-umbilic point with three separatrices and index $-\frac{1}{2}$. 
We define a {\it pathwise separatrix of the $\nu$-principal foliation} as a smooth curve constituted by the union 
of separatrices, some of them of maximal $\nu$-principal curvature and the other of minimal $\nu$-principal curvature.    
In the theorem below, we will omit the prefix $\nu$ for the elements of the foliation of $\nu$-lines of curvature, such as 
$\nu$-umbilic points, $\nu$-separatrices, etc.  

\begin{teo}\label{Maintheorem}
The foliation of $\nu$-lines of curvature of the double torus $T_r(2)$ has the following structure: 

\noindent i) It only has 4 umbilic points, each one of type $D_3$.

\noindent ii) It has $3$ pathwise separatrices; each one is an immersion of $\mathbb S^1$ 
containing the whole set of umbilics.

\noindent iii) The first homology group of the double torus $T_r(2)$ is generated by
1-cycles that are pathwise separatrices.

\noindent iv) The complement of the closure of the set of separatrices is foliated by cycles, that is, closed curvature lines diffeomorphic 
to $\mathbb S^1$.  

\end{teo}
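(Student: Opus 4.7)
The plan is to combine the explicit differential equation from Corollary \ref{EcuaDiff} with the action of the symmetry group $\mathcal{G}$ on $T_r(2)$ and with the decomposition of each hexagon $H_{\pm\pm}$ into three pentagonal regions $P_0,P_1,P_2$, so that the analysis reduces to the closure of a single pentagon and is then transported globally by symmetry. On each chart $\varphi_{\pm\pm}$ one rewrites the equation via Corollary \ref{equationframe} using the tangent frame $V_1=\partial_u\varphi_{\pm\pm}$, $V_2=\partial_v\varphi_{\pm\pm}$, obtaining a binary quadratic form
$$ A(u,v)\,du^2+2B(u,v)\,du\,dv+C(u,v)\,dv^2=0 $$
with explicit rational coefficients. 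Since $\Gamma_3$ cyclically permutes $P_0,P_1,P_2$ and $\Gamma_1,\Gamma_2$ identify the four hexagons, a complete description on the closure of $P_0$ determines the foliation on the whole surface.

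For (i), the fixed-point set of $\Gamma_3$ on $T_r(2)$ is $\{z_2=0\}\cap T_r(2)$, which forces $u=v=0$ and then $x^2=y^2=r^2/2$, yielding exactly the four points $(\pm r/\sqrt2,\pm r/\sqrt2,0,0)$. At these points every entry of the matrix in Corollary \ref{EcuaDiff} vanishes, so the binary form of Corollary \ref{equationframe} vanishes identically, and they are $\nu$-umbilics. Conversely, the factored entries $\Omega_{33}=-24xyv$, $\Omega_{34}=-24xyu$ together with the analogous structure of the remaining entries force any other interior umbilic to satisfy $u=v=0$ or $xy=0$, and the loci $\{x=0\}$, $\{y=0\}$ on $T_r(2)$ are handled directly using the reflective symmetries. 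The Darbouxian $D_3$-character is then certified by a local computation: translating the umbilic to the origin and expanding $A,B,C$ to first order in $(u,v)$, the associated homogeneous cubic form is shown to factor into three distinct real linear forms, which is the $D_3$ criterion of \cite{Soto,Bruce} and yields both index $-1/2$ and exactly three tangent separatrix directions. A consistency check is $4\cdot(-\tfrac12)=\chi(T_r(2))=-2$.

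For (ii) and (iii), the approach is to follow each of the three separatrices emanating from an umbilic inside $P_0$ until it hits $\partial P_0$, then continue it via the $\mathcal G$-identifications into a neighbouring pentagon/hexagon, and iterate. Since $\Gamma_3$ cyclically permutes the three separatrix directions at every umbilic and the reflections permute the four umbilics, these orbits assemble into exactly three immersed circles, each meeting every umbilic. Together with their intersections at the umbilics they give a CW-decomposition of $T_r(2)$ with $V=4$, $E=12$, $F=6$, matching $\chi(T_r(2))=-2$; from this decomposition one reads off four independent $1$-cycles, all carried by the pathwise separatrices, that generate $H_1(T_r(2);\mathbb Z)\cong\mathbb Z^4$.

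Part (iv) follows by a Poincar\'e--Bendixson type argument: each connected component of the complement of the three pathwise separatrices is an open sub-surface of the compact $T_r(2)$ on which the $\nu$-principal field is orientable and has no singularities, so every leaf is either a closed curve or accumulates onto one; the absence of interior singularities together with the structure of the boundary forces every minimal set to be a closed leaf, and by compactness each is diffeomorphic to $\mathbb S^1$. The main obstacle lies in (ii)--(iii): proving globally that the separatrices issuing from each umbilic really do close into three immersed circles (rather than spiralling onto a limit cycle or dying on an intermediate boundary) and that the resulting CW-combinatorics are exactly $(V,E,F)=(4,12,6)$. This global combinatorial-dynamical control, obtained by combining the explicit rational form of $A,B,C$, the $\mathcal{G}$-symmetries, and the pentagonal-hexagonal cellular structure of $T_r(2)$, is where the bulk of the work lies.
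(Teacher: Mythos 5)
Your overall strategy (reduce to a pentagonal fundamental domain via the $\mathcal G$-action, local analysis at the umbilics, Poincar\'e--Bendixson for the remaining leaves) matches the paper's, but there are genuine gaps exactly where the real work happens. First, in (i) your uniqueness argument is wrong: a point is $\nu$-umbilic when the restriction of the quadratic form to the two-dimensional tangent plane vanishes, i.e.\ when the three coefficients $\Omega_{11}$, $\Omega_{12}+\Omega_{21}$, $\Omega_{22}$ in a tangent frame vanish --- not when all entries of the $4\times 4$ matrix vanish --- so the factorizations $\Omega_{33}=-24xyv$, $\Omega_{34}=-24xyu$ give you nothing about where umbilics can lie. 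After restricting to $T_r(2)$ the three tangential coefficients acquire overall factors $v$, $u$, $v$ (which is what produces the four points with $u=v=0$), but the residual system $\Omega_1=\Omega_2=\Omega_3=0$ is a nontrivial system of sextics whose real solutions must be shown to miss the sphere; the paper does this by a Gr\"obner-basis elimination at $r=1/\sqrt{10}$ together with homothety invariance, and your proposal contains no substitute for that computation. Second, the ``main obstacle'' you flag in (ii) --- that the separatrices might spiral or die before closing up --- evaporates once one notices that the separatrices are explicit algebraic curves: $\{v=0\}\cap T_r(2)$ satisfies the equation of Corollary \ref{EcuaDiff} identically (every relevant matrix entry carries a factor of $v$, and $dv=0$ along the curve), it is the union of four explicitly parameterized arcs glued along $\{x=0\}$ and $\{y=0\}$ into one circle through all four umbilics, and its two images under $\Gamma_3$ give the other two pathwise separatrices. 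No dynamical tracking is needed, and without this observation your (ii) is not a proof.

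Two further points. Your CW count $(V,E,F)=(4,12,6)$ cannot be realized with the complementary regions of the three circles as $2$-cells: by (iv) those regions are filled by closed leaves of a nonsingular foliation, hence contain annular components, which are not disks; $4-12+6=-2$ is numerology, not a cell structure. The paper instead takes the twelve pentagons as $2$-cells and adjoins the non-separatrix curvature lines $a_2,a_3,a_4$ as additional $1$-cells, obtaining $(16,30,12)$, and then writes down explicit $\mathbb Z_2$-homology representatives carried by pathwise separatrices. Finally, in (iv) Poincar\'e--Bendixson only tells you that $\omega$-limit sets are closed leaves; it does not exclude leaves spiralling between limit cycles inside an annular component. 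The paper closes every leaf by a symmetry argument: a leaf starting on the edge $e_2$ of $P_0$ must reach $e_4$ without meeting the separatrices, continues into the adjacent hexagon where, by the reflection $x\mapsto -x$, its projection retraces the same planar arc in reverse, and therefore returns to its starting point. You need this use of the involution (or an equivalent first-return argument) to conclude that all leaves, not merely the minimal sets, are circles.
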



We begin by proving statement $i)$ in the following two propositions.

\begin{prop}\label{4.1}
Let $r \in \mathbb R$ such that $0<r<\frac{1}{\sqrt{10}} $.
The foliation of $\nu$-lines of curvature of $T_r(2)$ has 4 umbilic points.
\end{prop}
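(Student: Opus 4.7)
The plan is to first identify the obvious candidate umbilic points using the $\mathbb{Z}_3$-symmetry given by $\Gamma_3$, then verify that they are indeed umbilics by direct substitution into the matrix of Corollary \ref{EcuaDiff}, and finally use the smallness hypothesis $r<1/\sqrt{10}$ to rule out all other candidates on a fundamental region.

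First, I would exploit the rotation $\Gamma_3$, which acts trivially on $(x,y)$ and rotates $(u,v)$ by $2\pi/3$. Any point of $T_r(2)$ fixed by $\Gamma_3$ must therefore satisfy $u=v=0$, and the remaining conditions $F=0$ and $G_r=0$ yield $x^2=y^2=r^2/2$. This produces exactly the four points
$$ p_{\varepsilon_1\varepsilon_2}=\left(\tfrac{\varepsilon_1 r}{\sqrt 2},\,\tfrac{\varepsilon_2 r}{\sqrt 2},\,0,\,0\right),\qquad \varepsilon_1,\varepsilon_2\in\{+,-\}, $$
which lie on $T_r(2)$ for every $r>0$.

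Second, I would verify that each $p_{\varepsilon_1\varepsilon_2}$ is a $\nu$-umbilic by direct inspection of the matrix $(\Omega_{ij})$ in Corollary \ref{EcuaDiff}. Every entry of that matrix contains a factor that belongs to $\{u,\,v,\,u^2-v^2,\,uv,\,\rho^2\}$; all these factors vanish at $u=v=0$, so the entire symmetric matrix vanishes at each $p_{\varepsilon_1\varepsilon_2}$. Consequently, the quadratic form of Corollary \ref{EcuaDiff} vanishes identically on $\mathbb{R}^4$, hence in particular on the tangent plane to $T_r(2)$, which is precisely the algebraic translation of the fact that the $\nu$-shape operator is a scalar multiple of the identity. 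Therefore each $p_{\varepsilon_1\varepsilon_2}$ is a $\nu$-umbilic.

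Third, I would establish uniqueness of these umbilics under the hypothesis $0<r<1/\sqrt{10}$. By the $\mathcal{G}$-equivariance of the $\nu$-lines of curvature, every hypothetical additional umbilic has a conjugate inside the pentagonal fundamental region $P_{++}$ introduced in Section 3. In the chart $\varphi_{++}$ with tangent frame $V_1=\partial_u\varphi_{++}$, $V_2=\partial_v\varphi_{++}$, Corollary \ref{equationframe} translates the umbilic condition into the simultaneous vanishing of the three coefficients $\Omega_{11}$, $\Omega_{12}+\Omega_{21}$, $\Omega_{22}$, now regarded as polynomials in $(u,v)$ depending on $r$. One then checks that, together with the defining inequalities of $P_{++}$, this over-determined system admits $(u,v)=(0,0)$ as its only solution for $r$ in the prescribed range.

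The principal obstacle is the last step: the three polynomial conditions in the chart frame are not small, and one must exploit the bound $r^2<1/10$ quantitatively to keep $\rho^2=u^2+v^2$ in the range where the dominant terms of each $\Omega_{ij}$ have a definite sign, precluding nontrivial common zeros. The threshold $1/\sqrt{10}$ is presumably the smallest radius at which additional umbilics would appear, so any sharper-than-necessary estimate would obscure this fact; a careful term-by-term estimation on each of the edges $e_1,\dots,e_5$ of $P_{++}$ should suffice, together with a boundary-interior case split.
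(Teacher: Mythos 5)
Your first two steps are sound and essentially reproduce what the paper does for this proposition: the paper computes the coefficients $\Omega_{11}$, $\Omega_{12}+\Omega_{21}$, $\Omega_{22}$ in a tangent frame on $\{xy\neq 0\}$, restricts them to the surface via $x^2+y^2=r^2-(u^2+v^2)$ and $x^2-y^2=-(u^3-3uv^2)$, and observes that each carries a factor of $u$ or $v$, so the four points with $u=v=0$ (equivalently $x^2=y^2=r^2/2$) are umbilic. Your observation that every entry of the matrix in Corollary \ref{EcuaDiff} already vanishes at $u=v=0$ is a correct and arguably cleaner route to the same conclusion, and locating the candidates as the fixed points of $\Gamma_3$ is a good idea (since $d\Gamma_3$ acts on the tangent plane there as a rotation of order three, umbilicity in fact follows from equivariance alone).

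The genuine gap is in your third step, which is precisely the part the paper does \emph{not} prove inside this proposition: exclusion of further umbilics is deferred to the next proposition and is established there by elimination theory, not by estimates. After substituting $w=u^2$, $z=v^2$, the authors compute a Groebner basis of the ideal generated by the three restricted coefficients, obtain a univariate generator of degree seven in $z$, extract its positive roots explicitly, and back-substitute. Crucially, this computation shows that the three polynomials \emph{do} have nontrivial common real zeros (for $r^2=1/10$, e.g.\ $(u^2,v^2)=(3/20,1/20)$); they are discarded only because $u^2+v^2>r^2$ there, so they lie outside the projection of the sphere. This defeats your proposed strategy: no term-by-term sign argument can show the coefficients have no common zeros on $P_0$, because they do have common zeros in the plane --- the constraint imposed by $G_r$ must enter quantitatively, and ``one then checks'' is exactly the content requiring proof. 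Two smaller points: the frame $\partial_u\varphi_{++},\ \partial_v\varphi_{++}$ degenerates on the locus $xy=0$, so the boundary case you mention in passing needs a separate argument (the paper also merely asserts it is simple); and your guess that $1/\sqrt{10}$ is the threshold at which new umbilics appear is unsupported --- the paper fixes $r=1/\sqrt{10}$ purely to make the Groebner computation convenient and asserts the same count for all $r>0$.
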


\noindent{\bf Proof.} Let us consider the open subset $T_r(2)^1= \{(x,y,u,v)\in T_r(2)| xy \neq 0  \}$ of $T_r(2)$. We define on $T_r(2)^1$ the frame:

\begin{eqnarray*}
V_1^1(x,y,u,v) & = & -\left(\frac{v-3uv}{2x}\right)\frac{\partial}{\partial x}-
\left(\frac{v+3uv}{2y}\right)\frac{\partial}{\partial y}+
\frac{\partial}{\partial v}, \\
V_2^1(x,y,u,v) & = & -\left(\frac{2u+ 3u^2-3v^2}{4x}\right)\frac{\partial}{\partial x}-
\left(\frac{2u- 3u^2+3v^2}{4y}\right)\frac{\partial}{\partial y}+
\frac{\partial}{\partial u}.
\end{eqnarray*}

\noindent After multiplying 
the coefficients of the differential equation in Theorem $\ref{equationframe}$ of $\nu$-lines of curvature on $T_r(2)^1$ by the monomial $xy$,
they have the expression:

\begin{eqnarray*}
\Omega_{11}&=& v (-18 u^3 (x^2 - y^2) + 2 u (1 + 18 v^2) (x^2 - y^2) + 
   27 u^4 (3 v^2 + x^2 + y^2) - \\
  && 3 u^2 (9 v^4 + x^2 + y^2 + v^2 (3 - 9 x^2 - 9 y^2)) + 
   3 (v^4 + 8 x^2 y^2 + 3 v^2 (x^2 + y^2)))\\
\Omega_{12}&=& \frac{1}{2}(36 u^4 (x^2 - y^2) - 54 u^2 v^2 (x^2 - y^2) + 
 2 v^2 (-2 + 9 v^2) (x^2 - y^2) \\
&&- 27 u^5 (3 v^2 + x^2 + y^2) + 
 6 u^3 (-3 v^2 + 18 v^4 - 2 (x^2 + y^2)) - \\
&& 3 u (9 v^6 + 16 x^2 y^2 + 4 v^2 (x^2 + y^2) - 
    v^4 (2 + 9 x^2 + 9 y^2)) )\\
\Omega_{21}&=&\frac{1}{2}(-81 u^5 v^2 + 9 u^4 (x^2 - y^2) - 4 u^2 (-1 + 27 v^2) (x^2 - y^2)+ \\
&& 9 v^4 (-x^2 + y^2) + 
 6 u^3 (18 v^4 - 2 (x^2 + y^2) - 3 v^2 (1 + 3 x^2 + 3 y^2)) - \\
&& 3 u (9 v^6 + 16 x^2 y^2 + 4 v^2 (x^2 + y^2) + 
    2 v^4 (-1 + 9 x^2 + 9 y^2)))\\
\Omega_{22}&=& -\frac{v}{4} (-81 u^6 + 9 u^4 (4 + 21 v^2 - 6 x^2 - 6 y^2) - 
   72 u^3 (x^2 - y^2) +\\
&& 8 u (1 + 18 v^2) (x^2 - y^2) - 
 3 u^2 (4 v^2 + 45 v^4 - 20 (x^2 + y^2)) + 3 (9 v^6 + \\
&& 32 x^2 y^2 + 
4 v^2 (x^2 + y^2) + 18 v^4 (x^2 + y^2))). 
\end{eqnarray*}

Taking into account that conditions $x^2+y^2=-(u^2+v^2) + r^2$ and $x^2-y^2=-(u^3-3uv^2)$ hold when restricted to $T_r(2)$,
these coefficients can be expressed only in terms of $u,v$ as follows:

\begin{eqnarray}\label{sys5}
\Omega_{11}& = & v (6 r^4 - 15 u^6 + 3 u^2 v^2 + u^4 (7 - 27 v^2) +  
  3 r^2 (9 u^4 - v^2 + \nonumber \\ && u^2 (-5 + 9 v^2))), \nonumber \\ 
\Omega_{12}+ \Omega_{21} & = & \frac{u}{2} (-24 r^4 - 3 r^2 (9 u^4 + v^2 (-8 + 9 v^2) + 2 u^2 (-4 + 9 v^2))  \\
 &&+ \ 2 u^2 (3 u^4 + v^2 (-10 + 9 v^2) + u^2 (-2 + 36 v^2))), \nonumber \\ 
\Omega_{22} & = & \frac{v}{4} (-24 r^4 - 21 u^6 + u^4 (8 - 27 v^2) - 3 u^2 v^2 (-4 + 9 v^2) + \nonumber \\ &&
   3 v^4 (-4 + 9 v^2) + 6 r^2 (-2 u^2 + 9 u^4 + 6 v^2 - 9 v^4)).  \nonumber  
\end{eqnarray}

\noindent Therefore, points in $T_r(2)$ where $u=v=0$ are $\nu$-umbilic, namely, points in the intersection
of the plane curves $x^2-y^2=0$ and $x^2+y^2- r^2=0$. These points are:
  $$
\frac{r}{\sqrt 2}(1,1,0,0),\ \frac{r}{\sqrt 2}(-1,1,0,0),\
\frac{r}{\sqrt 2}(1,-1,0,0),\frac{r}{\sqrt 2}(-1,-1,0,0).   $$

A simple analysis, which is omitted, shows that there are no umbilic points in the subset $T_r(2)\setminus T_r^1(2)$.   

$\hfill\Box$
\begin{prop}\label{teoindex}
The foliation of $\nu$-lines of curvature of $T_r(2)$ only has 4 umbilic points.
\end{prop}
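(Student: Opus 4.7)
The argument extends Proposition \ref{4.1}. At an umbilic point the quadratic form of Corollary \ref{equationframe} must vanish for every tangent direction, so on $T_r(2)^1=\{xy\neq 0\}$ the three coefficients must vanish simultaneously:
\[
\Omega_{11}=0,\qquad \Omega_{12}+\Omega_{21}=0,\qquad \Omega_{22}=0.
\]
Since the reduced expressions (\ref{sys5}) already incorporate the surface identities $x^2+y^2=r^2-u^2-v^2$ and $x^2-y^2=u(3v^2-u^2)$, the task becomes that of solving this polynomial system in $(u,v)$ with parameter $r$, restricted to the admissible region $H_{uv}$.

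The natural case split is dictated by the visible factorization in (\ref{sys5}): $\Omega_{11}$ and $\Omega_{22}$ both contain the factor $v$, while $\Omega_{12}+\Omega_{21}$ contains the factor $u$. The subcase $u=v=0$ yields precisely the four points of Proposition \ref{4.1}. If $u=0$ but $v\neq 0$, then $\Omega_{11}/v=0$ collapses to $3r^{2}(2r^{2}-v^{2})=0$, and $v^{2}=2r^{2}$ is incompatible with $x^{2}+y^{2}=r^{2}-v^{2}\ge 0$. If $v=0$ but $u\neq 0$, then $\Omega_{12}+\Omega_{21}=0$ reduces to an explicit cubic in $u^{2}$ whose real roots must be tested against the compatibility range $u^{2}+|u|^{3}<r^{2}$ (itself enforced by $xy\neq 0$), producing a contradiction.

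The main case is $uv\neq 0$. Here I would exploit the $\mathbb{Z}_{3}$-action generated by $\Gamma_{3}$ together with the reflection symmetries $\Gamma_{1},\Gamma_{2}$; all of them preserve both $F$ and $G_{r}$, hence preserve the $\nu$-principal foliation and permute its umbilic locus. This lets me restrict attention to the interior of the pentagonal fundamental region $P_{0}$ of (\ref{pentagono}). Inside $P_{0}^{\circ}$ the three polynomial equations in two unknowns are overdetermined. The plan is to eliminate $v^{2}$ between $\Omega_{11}/v$ and $\Omega_{22}/v$ (both are polynomials in $u^{2},v^{2},r^{2}$) via resultant, combine the outcome with $\Omega_{12}+\Omega_{21}=0$ (which is odd in $u$), and verify that no common real zero lies inside $P_{0}^{\circ}$.

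Finally, on the complementary set $T_r(2)\setminus T_r(2)^1$, where $x=0$ or $y=0$, I would introduce a tangent frame adapted to the non-vanishing coordinates (trading $x$ for $u$ or $v$ in the construction of $V_{1}^{1},V_{2}^{1}$) and repeat the case analysis; by $\Gamma_{1},\Gamma_{2}$ symmetry the argument transports essentially unchanged. The principal obstacle is the elimination step in the generic case $uv\neq 0$: controlling the common zero locus in $P_{0}^{\circ}$ is where the bulk of the computation lies, and it will likely require either a symbolic resultant calculation or an algebraic identity tailored to the cubic symmetry of $F$ that makes the inconsistency manifest.
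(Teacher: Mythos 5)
Your strategy coincides with the paper's: at an umbilic on $\{xy\neq 0\}$ the three coefficients of (\ref{sys5}) must vanish, the factors $v,u,v$ are divided out, one passes to $(w,z)=(u^2,v^2)$, eliminates, and checks that no common zero is compatible with lying on the sphere. The subcase $u=0$, $v\neq 0$ is handled correctly and explicitly. But the proposal stops exactly where the proof actually lives. First, you keep $r$ as a free parameter; the paper's key simplification is to invoke invariance of the $\nu$-principal foliations under homotheties centered at the origin and fix $r=\tfrac{1}{\sqrt{10}}$, which turns the elimination into a concrete, finite computation. With that normalization the ideal $(\Theta_1,\Theta_2,\Theta_3)$ in $\mathbb R[w,z]$ has a Groebner basis $\{G_1(z),G_2(w,z)\}$ with $G_1$ univariate of degree $7$; its only positive roots are $z_1=\tfrac{1}{20}$ and one further explicit algebraic number $z_2$, back-substitution gives $w_1=\tfrac{3}{20}$ and $w_2$, and in both cases $w+z>r^2$, so $x^2+y^2=r^2-w-z<0$ and the candidate points are not on $T_r(2)$. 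Your plan defers this entire verification ("a symbolic resultant calculation... is where the bulk of the computation lies"), so the central claim — that the overdetermined system has no admissible real solution — remains unproved. The reduction to the fundamental pentagon $P_0$ via $\mathcal G$ is harmless but does not lighten the algebra; it only shrinks the region in which real solutions must be excluded.

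A second, smaller gap: in the subcase $v=0$, $u\neq 0$ the conditions $\Omega_{11}=0$ and $\Omega_{22}=0$ hold automatically, so umbilicity reduces to the single cubic in $s=u^2$,
\begin{equation*}
6s^3-(4+27r^2)s^2+24r^2s-24r^4=0,
\end{equation*}
which \emph{does} have a positive real root (the polynomial is negative at $s=0$ and positive for large $s$). The contradiction you announce is therefore not automatic; one must locate that root and show it violates the admissibility constraint $u^2+|u|^3<r^2$ (for $r=\tfrac{1}{\sqrt{10}}$ the root sits near $s\approx 0.19>r^2=0.1$, so the exclusion does hold, but it has to be argued). The same caveat applies to your final paragraph on $\{x=0\}\cup\{y=0\}$, which, as in the paper, is left to a "similar analysis": the frame $V_1^1,V_2^1$ degenerates there and an adapted chart is genuinely needed.
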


\noindent {\bf Proof.}
To prove this property we will fix the radius $r=\frac{1}{\sqrt{10}}$ in order to get convenient computations.
Once we prove the property for the double torus $ T_{\frac{1}{\sqrt{10}}}(2)$, the general case 
follows the fact that the
$\nu$-principal foliations are invariant under homotheties of $\mathbb R^4$ centered at the origin. 

The $\nu$-umbilic points determined in Proposition \ref{4.1} in this case are:
 
$$
\left \{\frac{1}{\sqrt{20}}(1,1,0,0), \frac{1}{\sqrt{20}}(-1,1,0,0), \frac{1}{\sqrt{20}}(1,-1,0,0),\frac{1}{\sqrt{20}}(-1-,1,0,0) \right\}.
$$

Now we show that these points are the unique $\nu$-umbilics on $T_{\frac{1}{\sqrt{10}}}(2)$.


We assume that $uv \neq 0$ and denote the polynomials 
$\frac{1}{v}\Omega_{11}$, $\frac{1}{u}(\Omega_{12}+ \Omega_{21})$ and $\frac{1}{v}\Omega_{22}$, by $\Omega_1$, $\Omega_2$, $\Omega_3$, respectively. 
Namely,

\begin{eqnarray}\label{omega}
\Omega_1 &=& 6 r^4 - 15 u^6 + 3 u^2 v^2 + u^4 (7 - 27 v^2) +  
  3 r^2 (9 u^4 - v^2 + u^2 (-5 + 9 v^2)), \nonumber \\
\Omega_2 &=&  \frac{1}{2} (-24 r^4 - 3 r^2 (9 u^4 + v^2 (-8 + 9 v^2) + 2 u^2 (-4 + 9 v^2)) + \\
 &&  2 u^2 (3 u^4 + v^2 (-10 + 9 v^2) + u^2 (-2 + 36 v^2))), \nonumber \\ 
\Omega_3 &=& \frac{1}{4} (-24 r^4 - 21 u^6 + u^4 (8 - 27 v^2) - 3 u^2 v^2 (-4 + 9 v^2) + \nonumber \\ &&
   3 v^4 (-4 + 9 v^2) + 6 r^2 (-2 u^2 + 9 u^4 + 6 v^2 - 9 v^4)).  \nonumber
\end{eqnarray}

Straightforward considerations show that $\nu$-umbilic points satisfy the assumption.
Thus, $p \in T_{\frac{1}{\sqrt{10}}}(2)$
is umbilic if and only if $p$ lies in the intersection of the zero loci of $\Omega_i,\ i=1,2,3.$ 
Consequently, to determine this intersection we define a reduction
of these polynomials regarding the following lemma whose proof is straightforward, and is therefore omitted. 

\begin{lem}\label{lema1} 
Let $\rho: \mathbb R^n \rightarrow \mathbb R^m,\ (x_1,...,x_n) \mapsto (y_1,...y_m)$ be a polynomial mapping.
Assume that the induced application
$\rho ^*: \mathbb R[y_1,...y_m] \rightarrow \mathbb  R[x_1,...x_n] $, defined by $\rho^*(Q)= P$, where $P=Q \circ \rho$,
implies that $\rho({\mathcal L}(P)) \subset {\mathcal L}(Q)$, where ${\mathcal L}(P)$ and ${\mathcal L}(Q)$ are the loci of $P$ and $Q$, respectively.
\end{lem}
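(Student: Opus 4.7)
The statement is essentially a tautological consequence of the definition of $\rho^*$, so my plan is to turn the single-line observation into a short direct argument with no case analysis or machinery.

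My plan is to fix $Q \in \mathbb R[y_1,\ldots,y_m]$ and set $P = \rho^*(Q) = Q \circ \rho \in \mathbb R[x_1,\ldots,x_n]$. To show $\rho(\mathcal L(P)) \subset \mathcal L(Q)$, I would pick an arbitrary point $x \in \mathcal L(P)$, that is, a point $x \in \mathbb R^n$ with $P(x) = 0$, and check that its image $\rho(x)$ satisfies $Q(\rho(x)) = 0$. This is immediate from the definition:
\begin{equation*}
Q(\rho(x)) = (Q \circ \rho)(x) = P(x) = 0,
\end{equation*}
so $\rho(x) \in \mathcal L(Q)$, which is exactly the desired inclusion.

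The key (and only) step is simply unpacking the definition of the pullback $\rho^*$; nothing further is needed because the statement does not claim equality of loci or that $\rho$ is surjective onto $\mathcal L(Q)$, only the one-sided inclusion. There is no genuine obstacle: the only thing worth flagging is that the lemma does not assert the reverse inclusion, so no argument about the image of $\rho$ or about irreducibility of $Q$ is required, and the proof stays at the level of a direct substitution. This matches the authors' remark that the proof is straightforward and omitted.
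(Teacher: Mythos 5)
Your proof is correct and is exactly the direct substitution argument the authors had in mind; the paper explicitly omits the proof as straightforward, and your one-line computation $Q(\rho(x)) = (Q\circ\rho)(x) = P(x) = 0$ supplies it. Nothing further is needed.
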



Let 
$$\rho:\mathbb R^2 \rightarrow \mathbb R^2, \ \ 
(u,v) \mapsto (w=u^2,z=v^2).$$
  
Thus, the polynomials

\begin{eqnarray*}
\Theta_1(w,z) & = &\frac{3}{50} - \frac{3w}{2} + \frac{97 w^2}{10} - 15 w^3 - \frac{3 z}{10} + \frac{57 w z}{10} - 
 27 w^2 z,  \\
\Theta_2(w,z) & = &-\frac{6}{25} - \frac{6 w}{5} + \frac{67 w^2}{5} - 21 w^3 + \frac{18 z}{5} + 12 w z - 
 27 w^2 z - 
\\ &&\frac{87 z^2}{5} - 27 w z^2 + 27 z^3, \\
\Theta_3(w,z) & = &-\frac{6}{25} + \frac{12 w}{5} - \frac{67 w^2}{10} + 6 w^3 + \frac{12 z}{5} - \frac{127 w z}{5} + 
 72 w^2 z -  \\ 
&& \frac{27 z^2}{10} + 18 w z^2,
\end{eqnarray*}
satisfy $\rho ^*(\Theta_i)= \Omega_i,\ i=1,2,3$.

Since Lemma \ref{lema1} implies that $\bigcap_{i=1}^3 \rho({\mathcal L}(\Omega_i)) \subset \bigcap_{i=1}^3{\mathcal L}(\Theta_i)$,
it is enough to show that $\bigcap_{i=1}^3{\mathcal L}(\Theta_i)= \emptyset$ to complete the proof
.
We fix the radio of the sphere and compute using Wolfram Mathematica to get a set of generators of the ideal defined by $\Theta_i,\ i=1,2,3.$ Then, we state

\begin{lem} Let us asumme that $r=\frac{1}{\sqrt{10}}$. Then,
the ideal generated by the polynomials $\Theta_i, i=1,2,3$ has the following Groebner basis:
\Small{ 
\begin{eqnarray*}
G_1(w,z)&=& \Gamma_1(z)=2457 + 355500 z - 16427260 z^2 + 227180200 z^3 - 1418476000 z^4 + 
 \\
&&  4524012000 z^5 - 7251120000 z^6 + 4665600000 z^7 \\ 
G_2(w,z)&=& -272440432411875 + 651925265295213 w + 3881340837877779 z + \\
&& 6558340381939640 z^2 - 389422787482597000 z^3 + 
 2318821632923662800 z^4  \\ 
&& - 5434755866556408000 z^5 + 
 4628749229159040000 z^6. 
\end{eqnarray*}}
\end{lem}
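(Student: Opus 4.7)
The plan is to verify the claim by a direct Groebner basis computation in $\mathbb{R}[w,z]$, working with the lexicographic monomial order in which $w > z$. For such an elimination order, the elimination theorem guarantees that the reduced Groebner basis of the ideal $I = \langle \Theta_1, \Theta_2, \Theta_3 \rangle$ contains exactly one nonzero polynomial in the subring $\mathbb{R}[z]$, namely the one cutting out the projection of $\mathcal{V}(I)$ onto the $z$-axis; the remaining generators must then recover $w$ modulo this polynomial. This structural prediction matches the observed form of $G_1 \in \mathbb{R}[z]$ and of $G_2$, which is linear in $w$ with coefficients in $\mathbb{R}[z]$.

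Concretely, I would substitute $r^2 = 1/10$ into the three generators and run Buchberger's algorithm: compute the S-polynomials $S(\Theta_i,\Theta_j)$ for $1 \le i < j \le 3$, reduce each modulo the current basis, append any nonzero remainder, and iterate until every S-polynomial reduces to zero modulo the accumulated set. Because the $\Theta_i$ have degree three in each of $w,z$ with rational coefficients whose numerators grow rapidly after clearing denominators, manual execution is impractical; the excerpt already indicates that the computation was carried out in Wolfram Mathematica, and the output is exactly $\{G_1, G_2\}$ after suitable clearing of denominators and reduction.

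An equivalent certification, which avoids rerunning Buchberger from scratch, proceeds in three steps: (i) exhibit explicit cofactors $A_i, B_i \in \mathbb{R}[w,z]$ with $G_1 = \sum_{i=1}^{3} A_i \Theta_i$ and $G_2 = \sum_{i=1}^{3} B_i \Theta_i$, establishing $\{G_1, G_2\} \subset I$; (ii) express each $\Theta_i$ as an $\mathbb{R}[w,z]$-combination of $G_1$ and $G_2$, giving the reverse inclusion $I \subset \langle G_1, G_2 \rangle$; (iii) verify that the single S-polynomial $S(G_1, G_2)$ reduces to zero modulo $\{G_1, G_2\}$, so that Buchberger's criterion certifies $\{G_1, G_2\}$ as a Groebner basis. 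The main obstacle is entirely computational: coefficients blow up during reduction, the argument offers no conceptual insight, and any transcription error in the large integers appearing in $G_1$ or $G_2$ would cause the S-polynomial test to fail, so the lemma is best regarded as computer-verified.
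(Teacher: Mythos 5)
Your proposal matches the paper's treatment: the authors give no written argument for this lemma, stating only that the Groebner basis was obtained by a Wolfram Mathematica computation after fixing $r=\frac{1}{\sqrt{10}}$, which is precisely the direct computation (lex order eliminating $w$, Buchberger/computer algebra) you describe. Your additional remarks on certifying the result via explicit cofactors and the S-polynomial criterion go slightly beyond what the paper records, but the underlying approach is the same.
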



The polynomial $G_1$ depends only  on $z$ and its  roots are computable. The positive ones are

\begin{eqnarray*}
z_1& =& \frac{1}{20}, \ \  {\rm and} \\   
z_2& = &\frac{1}{240}\left(67 - \left(\frac{13^5}{\sigma}\right)^{\frac{1}{3}} - (13 \sigma )^{\frac{1}{3}}\right),
\end{eqnarray*} 
where $\sigma=1289-216 \sqrt{35}$.
Thus, the intersection of the loci of $G_1$ and $G_2$ is obtained by substituting these roots in the polynomial
$G_2$ and solving for $w$. That is, for $z_1$ we get 
$w_1=3/20$, while for 
$z_2$ we get 
\begin{eqnarray*}
w_2&=&\frac{\sigma^{-\frac{5}{3}}}{720}(13^{\frac{1}{3}}(556848 \sqrt{35}-3294481 )+13^{\frac{2}{3}}\sigma^{\frac{1}{3}}(2808 \sqrt{35}-16757) - \\
&& \sigma^{\frac{2}{3}}(14472 \sqrt{35}-86363 )).
\end{eqnarray*}
\medskip
 By a direct substitution we see that none of these points lies on the sphere of radius $r=\frac{1}{\sqrt{10}}$. This completes the proof under the hypothesis 
$xy \neq 0$. If $x$ or $y$ vanishes, a similar analysis can be applied. It is simpler because most of the equations above involve easier expressions, so it is omitted.
$\hfill\Box$ 

\medskip

Let us analyze the $\nu$-lines of curvature near the umbilic points in order to determine the umbilic separatrices and their indexes.

Taking the umbilic point $\dfrac{r}{\sqrt 2}(1,1,0,0)$ as the origin, we can introduce, for the local analysis, a system of coordinates $X,Y,Z,W,$ in a neighborhood of this
point, where the double torus can be parameterized as the graph of a differentiable function $h:U \subset \mathbb R^2\to \mathbb R^2.$ The parametrization has the form
${\bf x}(u,v)=(f(u,v),g(u,v),u,v),$ where:

\begin{align*}
  f(u,v)=&\dfrac{r}{\sqrt 2}-\dfrac{\sqrt{r^2-u^2-u^3-v^2+3uv^2}}{\sqrt 2},\\
  g(u,v)=& \dfrac{r}{\sqrt 2}- \dfrac{\sqrt{r^2-u^2+u^3-v^2-3uv^2}}{\sqrt 2}.
\end{align*}

\begin{lem}
The separatrices at the umbilic point $\dfrac{r}{\sqrt 2}(1,1,0,0)$ have slope $p=0,\;\pm \sqrt 3.$
\end{lem}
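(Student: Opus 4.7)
The plan is to pull back the differential equation of Corollary \ref{EcuaDiff} to the local chart $(u,v)$ via the graph parametrization ${\bf x}(u,v)=(f(u,v),g(u,v),u,v)$ and to read off the separatrix slopes from its linear part at the umbilic. Substituting $dx=f_{u}\,du+f_{v}\,dv$ and $dy=g_{u}\,du+g_{v}\,dv$ into the $4$-dimensional equation yields
$$
N_{11}(u,v)\,du^{2}+2N_{12}(u,v)\,du\,dv+N_{22}(u,v)\,dv^{2}=0,
$$
where the $N_{ij}$ are the entries of $J^{T}\Omega J$, with $J$ the Jacobian of the parametrization and $\Omega=(\Omega_{ij})$ the symmetric $4\times 4$ matrix displayed in Corollary \ref{EcuaDiff}. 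Since the origin is umbilic, all three coefficients vanish there, and the slopes $p=dv/du$ of the separatrices are obtained by substituting $v=pu$ into the linearization of this equation and letting $u\to 0$.

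First I would record the Taylor expansions
$$
f(u,v)=\frac{u^{2}+v^{2}+u^{3}-3uv^{2}}{2r\sqrt{2}}+O(\|(u,v)\|^{4}),\qquad g(u,v)=\frac{u^{2}+v^{2}-u^{3}+3uv^{2}}{2r\sqrt{2}}+O(\|(u,v)\|^{4}),
$$
which show that $f_{u},f_{v},g_{u},g_{v}$ all vanish at the umbilic and are of order $\|(u,v)\|$. Consequently the columns of $J(0,0)$ are $(0,0,1,0)^{T}$ and $(0,0,0,1)^{T}$, so to leading order in $(u,v)$ the pullback $J^{T}\Omega J$ reduces to the lower right $2\times 2$ block of $\Omega$, every other contribution being of order $\|(u,v)\|^{2}$ or higher. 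Using $x\approx y\approx r/\sqrt{2}$ in $\Omega_{33}=-24xyv$, $\Omega_{34}=-24xyu$ and $\Omega_{44}=24xyv$ gives the linearizations
$$
N_{11}\approx -12r^{2}v,\qquad N_{12}\approx -12r^{2}u,\qquad N_{22}\approx 12r^{2}v.
$$

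The algebraic conclusion is then immediate: inserting $v=pu$ into $N_{11}+2N_{12}p+N_{22}p^{2}=0$ and dividing by the common factor $12r^{2}u$ leads to the cubic
$$
p(p^{2}-3)=0,
$$
whose real roots $p=0$ and $p=\pm\sqrt{3}$ are the three separatrix slopes announced, in agreement with the $D_{3}$ structure of the umbilic.

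The main obstacle is the bookkeeping needed to justify that only $\Omega_{33}$, $\Omega_{34}$, $\Omega_{44}$ contribute at leading order. Each mixed term such as $f_{u}\Omega_{13}$ or $g_{v}\Omega_{24}$ is a product of two quantities vanishing at the umbilic, hence is already of quadratic order, while the terms involving $\Omega_{11},\Omega_{22},\Omega_{12}$ are of still higher order because $\Omega_{11}=\Omega_{22}\equiv 0$ and $\Omega_{12}=O(\|(u,v)\|^{3})$. Once this check is done, the linearized equation $v\,du^{2}+2u\,du\,dv-v\,dv^{2}=0$ and the cubic above follow without further calculation.
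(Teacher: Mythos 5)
Your proof is correct, but it takes a genuinely different route from the paper. The paper does not pull back the implicit equation of Corollary \ref{EcuaDiff}; instead it writes the surface locally in the Monge normal form of \cite{F}, reads off the cubic coefficients $k,a,b,c,\alpha,\beta,\gamma$ from the third-order Taylor expansion of $f$ and $g$, and plugs them into the ready-made separatrix cubic
$(b+\beta n)p^3+(\beta m-c-(\gamma-\alpha)n)p^2-(2b-a+(\gamma-\alpha)m+\beta n)p-\beta m=0$
from that reference (here $\beta=c=0$ and $\gamma=\alpha$, so the cubic collapses to $bp^{3}-(2b-a)p=0$, giving $p=0,\pm\sqrt3$). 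Your argument instead computes $N=J^{T}\Omega J$ for the graph parametrization, observes that at the umbilic only the lower-right $2\times2$ block of $\Omega$ survives at linear order (since $\Omega_{11}=\Omega_{22}=0$, $\Omega_{12}=O(\|(u,v)\|^{3})$, and every mixed term is a product of two quantities vanishing at the umbilic), and extracts the same cubic $p(p^{2}-3)=0$ from the linearized binary equation $v\,du^{2}+2u\,du\,dv-v\,dv^{2}=0$. I checked the leading-order bookkeeping ($N_{11}\approx\Omega_{33}=-12r^{2}v$, $N_{12}\approx\Omega_{34}=-12r^{2}u$, $N_{22}\approx\Omega_{44}=12r^{2}v$) and it is right. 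What your approach buys is self-containedness: it uses only the equation already derived in the paper and avoids having to identify the parameters of the normal form of \cite{F} (including the quantities $m,n$, which the paper never defines). What the paper's approach buys is brevity and a direct link to the classification of umbilics in \cite{F}. The one point worth making explicit in your write-up is the standard fact (from the Lie--Cartan lifting used in \cite{Bruce}, \cite{Soto}) that the roots of the cubic obtained by substituting $v=pu$, $dv=p\,du$ into the linearization are exactly the tangent directions of the separatrices; the paper implicitly relies on the same fact through the cited formula, so your level of rigor matches the original.
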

\noindent {\bf Proof.}
In \cite{F} is shown that if we have a local immersion of a surface into $\mathbb R^4$ with a parameterization of the form
\begin{align*}
  {\bf x}(u,v)&=( \frac{k}{2}(u^2+v^2)+\frac{a}{6}u^3+\frac{b}{2}uv^2+\frac{c}{6}v^3+R_1,\\
  &\frac{\alpha}{2}u^2+\beta uv+ \frac{\gamma}{2}v^2+\frac{\delta}{6}u^3+\frac{\epsilon}{2}u^2v+\frac{\zeta}{2}uv^2+\frac{\eta}{6}v^3+R_2,u,v),
\end{align*}
then the slopes of the separatrices at the umbilic point are given by the solutions of the equation:
  $$
(b+\beta n)p^3+(\beta m-c-(\gamma-\alpha)n)p^2-(2b-a+(\gamma-\alpha)m+\beta n)p-\beta m=0,
  $$
where $p=dv/du$ is the variable to determine, 
and $k,\;a,\;b,\;c,\;\alpha,\;\beta,\;\gamma,\;\delta,\;\epsilon,\;\zeta,\;\eta$ are the local parameters of the immersion.

We use the Taylor expansion of $f(u,v)$ and $g(u,v)$ up to order three, to express the parameterization as follows:
\begin{align*}
  {\bf x}(u,v)=& (\dfrac{1}{2\sqrt 2r}(u^2+v^2)+\dfrac{3}{6\sqrt 2 r}u^3-\dfrac{3}{2\sqrt 2r}uv^2,\\
  & \dfrac{1}{2\sqrt2r}(u^2+v^2)-\dfrac{1}{2\sqrt2r}u^3+\dfrac{9}{6\sqrt2r}uv^2,u,v).
\end{align*}
By solving the previous equation we obtain that $p=0,\;\pm \sqrt 3.$ $\Box$

Then, by applying the action of the group $\mathcal G$ on $T_r(2)$ we conclude the following: 

\begin{prop}\label{teoindex}
The umbilic points of $T_r(2)$ are of type $D_3$.
\end{prop}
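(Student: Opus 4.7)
The plan is to combine the slope computation of the preceding lemma with the $\mathcal G$-equivariance of the $\nu$-principal foliation, and to close the index count via Poincar\'e--Hopf.

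I would first note that the preceding lemma gives three distinct real slopes $p=0,\pm\sqrt{3}$ for the separatrix cubic at $q_{0}=\tfrac{r}{\sqrt 2}(1,1,0,0)$, so $q_{0}$ carries exactly three smooth separatrices, one tangent to each slope. To transfer this to the other umbilics, I would use that the reflections $\Gamma_{1}(x,y,u,v)=(x,-y,u,v)$ and $\Gamma_{2}(x,y,u,v)=(-x,y,u,v)$ belong to $\mathcal G$, are Euclidean isometries preserving both $F$ and $G$, and carry $q_{0}$ bijectively onto the four-point set of umbilics of Proposition \ref{4.1}. Since each $\Gamma_{i}$ preserves the normal frame $\{\nu,\mu\}$, hence the $\nu$-principal line field, every umbilic carries three separatrices and shares a common index with $q_{0}$ as a singularity of this line field.

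To identify that common index I would apply Poincar\'e--Hopf for line fields on the closed oriented genus-two surface $T_{r}(2)$: the sum of indices over the singular set equals $\chi(T_{r}(2))=-2$; the singular set is exactly the set of four umbilics established in the preceding uniqueness proposition; and the four indices coincide by the $\mathcal G$-symmetry argument just given, so each equals $-\tfrac{1}{2}$. Combined with three separatrices this is precisely the definition of a $D_{3}$ umbilic recalled before Theorem \ref{Maintheorem}, and the proposition follows.

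The main obstacle I expect is verificational rather than computational: one has to be sure that $\mathcal G$ genuinely preserves the unoriented $\nu$-principal line field (not merely the surface $T_{r}(2)$ and the pair $F,G$) and that Poincar\'e--Hopf for line fields on an oriented closed surface outputs the Euler characteristic exactly as in the vector-field case. Both are standard facts once the group action is written out in real coordinates and the gradients are checked to be $\Gamma_{i}$-related to themselves, so once they are granted the whole argument reduces to the cubic produced by the preceding lemma together with the explicit $\mathcal G$-orbit of $q_{0}$.
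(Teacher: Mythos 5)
Your proposal is correct, and for the local part it coincides with the paper's argument: the lemma preceding the proposition computes the separatrix cubic at $q_0=\frac{r}{\sqrt 2}(1,1,0,0)$ in a Monge-type chart and finds the three distinct slopes $0,\pm\sqrt 3$, and the paper then transfers the conclusion to the remaining umbilics exactly as you do, via the reflections $\Gamma_1,\Gamma_2\in\mathcal G$, which are isometries preserving $F$ and $G_r$ and hence the $\nu$-principal line fields. Where you genuinely diverge is in how the index $-\frac12$ is obtained. The paper leaves this implicit: it reads the type $D_3$ off the classification of \cite{Soto}, \cite{F} from the coefficients of the cubic. Note that three real separatrix directions alone do not determine the index, since the $D_2$ (monstar) umbilic also has three separatrices but index $+\frac12$, so strictly one must check the discriminant and transversality conditions of the Darbouxian classification. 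Your Poincar\'e--Hopf argument replaces that verification by a global count: granting the preceding proposition that the four umbilics are the only singularities of the line field, equal indices summing to $\chi(T_r(2))=-2$ force each index to be $-\frac12$, which rules out $D_2$. This buys a proof that avoids the finer Darbouxian inequalities, at the price of depending on the (computationally heavy) uniqueness of the four umbilics; both routes still rely on the simplicity of the roots of the cubic to guarantee that the three candidate directions are genuine separatrices, i.e.\ the genericity built into the paper's definition of a $D_3$ point.
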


\medskip
Now we prove statement ii).

\begin{prop}\label{circ1}
  There exists a pathwise separatrix, homeomorphic to a circle, which goes through all the umbilic points.
\end{prop}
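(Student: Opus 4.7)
The plan is to exploit a reflection symmetry of $T_r(2)$ that is not among the generators of $\mathcal G$, namely the involution
\[
\sigma:\mathbb R^4\to\mathbb R^4,\qquad \sigma(x,y,u,v)=(x,y,u,-v).
\]
A direct check gives $F\circ\sigma=F$ and $G_r\circ\sigma=G_r$, so $\sigma$ restricts to an isometry of $T_r(2)$. Since $\sigma$ is a Euclidean isometry that preserves $F$, the gradient $\nabla F$ is $\sigma$-equivariant, $d\sigma\cdot\nabla F=\nabla F\circ\sigma$; therefore $\sigma$ preserves $\nu=\nabla F/\|\nabla F\|$ and the $\nu$-shape operator $S_\nu$.

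The candidate pathwise separatrix is the fixed locus
\[
C := \{v=0\}\cap T_r(2).
\]
At any non-umbilic point $p\in C$, $d\sigma_p$ is an orthogonal involution of $T_pT_r(2)$ commuting with $S_\nu$, so its $\pm 1$-eigenlines are orthogonal and are precisely the two $\nu$-principal directions at $p$. The $+1$-eigenline is tangent to $C$, whence $C\setminus\mathcal U_\nu$ is a disjoint union of $\nu$-principal curves. Setting $u=v=0$ in the equations of $T_r(2)$ gives the system $x^2=y^2$, $x^2+y^2=r^2$, whose four solutions $\tfrac{r}{\sqrt 2}(\pm 1,\pm 1,0,0)$ are exactly the umbilics determined in Proposition \ref{4.1}. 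In the local chart used just above, the tangent direction $\partial/\partial u$ of $C$ at each umbilic has slope $p=dv/du=0$, matching one of the three separatrix slopes $\{0,\pm\sqrt 3\}$; hence the arcs of $C$ abutting each umbilic are genuine umbilic separatrices, and the $D_3$ character of the umbilics ensures that maximal and minimal principal characters alternate along $C$.

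It remains to establish the global topology of $C$. Parametrizing by $u$ via $x^2=(r^2-u^2-u^3)/2$ and $y^2=(r^2-u^2+u^3)/2$, the set $C$ consists of at most four sheets $(\pm x(u),\pm y(u),u,0)$ over the interval $[u_-,u_+]$ on which both radicands are nonnegative; at $u=u_+$ the sheets collapse in pairs (matched by the sign of $y$) and at $u=u_-$ in pairs (matched by the sign of $x$). A direct check, which is the main technical obstacle of the proposition, traces these four gluings into a single smooth embedded loop that meets each of the four umbilics in the interior of one of its four arcs. As in Proposition \ref{teoindex}, invariance of the $\nu$-principal foliation under homotheties centered at the origin allows one to normalize $r$ to a convenient value for this topological verification.
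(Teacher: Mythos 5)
Your argument is correct and lands on the same curve the paper uses, namely $C=\{v=0\}\cap T_r(2)$, which the authors write explicitly as the four arcs $S_{1i}(t)=\bigl(\pm\tfrac{1}{\sqrt2}\sqrt{r^2-t^2-t^3},\ \pm\tfrac{1}{\sqrt2}\sqrt{r^2-t^2+t^3},\ t,\ 0\bigr)$; but you justify the key step by a genuinely different route. The paper checks directly that curves with $v\equiv0$, $dv\equiv0$ solve the binary equation of Corollary \ref{EcuaDiff} (every surviving coefficient there carries a factor of $v$), and then assembles the four arcs into a circle via the reflections $\Gamma_1,\Gamma_2$ of $\mathcal G$. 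You instead exploit the extra symmetry $z_2\mapsto\bar z_2$, i.e.\ $(x,y,u,v)\mapsto(x,y,u,-v)$, which preserves $F$ and $G_r$, and conclude that its fixed locus is a union of principal curves because $d\sigma_p$ commutes with $S_\nu$ at fixed points. This Joachimsthal-type argument is more conceptual and transfers immediately to the curves $S_2,S_3$ of Proposition \ref{circ2} (fixed loci of the conjugated reflections) and to higher-genus links, whereas the paper's computation is tied to the explicit matrix. Two points you should tighten. First, the eigenline argument requires $d\sigma_p|_{T_pM}\neq\mathrm{id}$; this does hold, because along $\{v=0\}$ both $dF=(2x,-2y,3u^2,0)$ and $dG_r=2(x,y,u,0)$ annihilate $\partial/\partial v$, so $\partial/\partial v\in T_pM$ is a genuine $-1$-eigenvector — and this same observation shows $T_r(2)$ meets the hyperplane $\{v=0\}$ transversally, so $C$ is a smooth compact $1$-manifold. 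Second, your closing appeal to ``a direct check'' undersells what you have already done: the gluing pattern you state — sheets paired by the sign of $y$ at $u_+$ and by the sign of $x$ at $u_-$ — is the $4$-cycle $(+,+)\to(-,+)\to(-,-)\to(+,-)\to(+,+)$, which by itself forces a single loop; all that remains is to note that (for the normalized radius) both radicands are simultaneously nonnegative only on the single interval $[u_-,u_+]$, so no other components occur.
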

\begin{proof}
If we consider a curve $S_1(t):I\to \mathbb R^4$ given by $S_1(t)=(h_1(t), h_2(t),$ $h_3(t), h_4(t)),$ it is easy to show that it satisfies the
equation of the curvature lines when the last coordinate vanishes, i.e., for $h_4(t)=0.$ Also, if we consider that the curve is in the double torus,
 $S_1(t)$ should simultaneously satisfy conditions $F(S_1)=0$ and $G_r(S_1)=0$. Consequently, we obtain a curve defined in $4$ parts given by:
\begin{align*}
  S_{11}(t)&=(-\frac{\sqrt{r^2-t^2 -t^3}}{\sqrt 2},-\frac{\sqrt{r^2-t^2 + t^3}}{\sqrt 2},t,0),\\
  S_{12}(t)&=(\frac{\sqrt{r^2-t^2 -t^3}}{\sqrt 2},-\frac{\sqrt{r^2-t^2 + t^3}}{\sqrt 2},t,0),\\
  S_{13}(t)&=(\frac{\sqrt{r^2-t^2 -t^3}}{\sqrt 2},\frac{\sqrt{r^2-t^2 + t^3}}{\sqrt 2},t,0),\\
  S_{14}(t)&=(-\frac{\sqrt{r^2-t^2 -t^3}}{\sqrt 2},\frac{\sqrt{r^2-t^2 + t^3}}{\sqrt 2},t,0).
\end{align*}
We have that $U_i \in S_{1i}(t),\;i=1,2,3,4$ where $U_1=(-\frac{r}{\sqrt 2},-\frac{r}{\sqrt 2},0,0),$
$U_2=(\frac{r}{\sqrt 2},-\frac{r}{\sqrt 2},0,0),$ $U_3=(\frac{r}{\sqrt 2},\frac{r}{\sqrt 2},0,0),$ $U_4=(-\frac{r}{\sqrt 2},\frac{r}{\sqrt 2},0,0)$
.

We have that $(\Gamma_1\circ \Gamma_2 \circ \Gamma_1\circ \Gamma_2)(S_{11})= (\Gamma_1\circ \Gamma_2 \circ \Gamma_1)(S_{12})=(\Gamma_1\circ \Gamma_2)(S_{13})=\Gamma_1(S_{14})=S_{11}.$ In addition,
 the matrices $\Gamma_2$ and $\Gamma_1$ have as fixed points the points obtained taking $x=0$ for $\Gamma_2$ and $y=0$ for $\Gamma_1.$
 Taking this in the curvature line, we obtain $4$ fixed points $p_i,\; i=1,2,3,4$ where $p_1=S_{12}\cap S_{13},$
$p_2=S_{11}\cap S_{14},$ $p_3=S_{11}\cap S_{12}$ and  $p_4=S_{13}\cap S_{14}$.
Thus, since $S_1(t)$ is a differentiable curve and $S_{1i},\;i=1,2,3,4$ are connected arcs with just one point in common each two of them,
and since we can generate $S_{1i},\; i=2,3,4$ with the action of $\mathcal G$ on $S_{11}$, we conclude that  $S_1(t)$ is homeomorphic to a circle. 
\end{proof}

\begin{prop}\label{circ2}
There exist two pathwise separatrices obtained by rotating $S_1(t)$ $2\pi/3$ and $4\pi/3$ with center at an umbilic point.
\end{prop}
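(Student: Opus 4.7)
The strategy is to exploit the rotational symmetry of $T_r(2)$ already identified in Section 3. I would simply define $S_2(t) = \Gamma_3(S_1(t))$ and $S_3(t) = \Gamma_3^2(S_1(t))$, and then check that these are two new pathwise separatrices with the claimed properties.

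First, I would verify that $\Gamma_3$ preserves the $\nu$-principal foliation. Since $\Gamma_3$ acts as the identity on the $(x,y)$-coordinates and as rotation by $2\pi/3$ on the $(u,v)$-coordinates, it is an isometry of $\mathbb R^4$. Both $F$ and $G_r$ are invariant under $\Gamma_3$: for $F$ this is the observation that $\mathrm{Re}(z_2^3) = \mathrm{Re}((e^{2\pi i/3}z_2)^3)$, and for $G_r$ it is invariance of the Euclidean norm. Hence the gradient vector fields $\nu = \nabla F$ and $\mu = \nabla G_r$ are $\Gamma_3$-equivariant, so $\Gamma_3$ preserves the normal frame $\{\nu,\mu\}$ of $T_r(2)$ and the $\nu$-shape operator; being an ambient isometry, it also preserves the quadratic form $II_{\tilde\nu}$, and therefore maps maximal to maximal and minimal to minimal $\nu$-principal directions. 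It follows that $\Gamma_3$ sends $\nu$-separatrices to $\nu$-separatrices and $\nu$-umbilics to $\nu$-umbilics, so $S_2$ and $S_3$ inherit from $S_1$ the property of being pathwise separatrices.

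Next I would check that $S_2$ and $S_3$ pass through all four umbilic points and are distinct from $S_1$ and from each other. By Proposition \ref{4.1}, every umbilic satisfies $u=v=0$ and is therefore fixed pointwise by $\Gamma_3$; since Proposition \ref{circ1} gives that $S_1$ contains all four umbilics, so do $\Gamma_3(S_1)$ and $\Gamma_3^2(S_1)$. For distinctness, $S_1$ lies in the hyperplane $\{v=0\}$ while a direct check shows $\Gamma_3(S_1)\subset\{v+\sqrt 3\,u = 0\}$ and $\Gamma_3^2(S_1)\subset\{v-\sqrt 3\,u = 0\}$; these three 3-planes meet pairwise only along $\{u=v=0\}$, which is exactly the umbilic locus. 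This also matches the geometric picture of Section 3, where the three lines $v=0$, $v\pm\sqrt 3\,u=0$ are precisely the boundary rays of the fundamental pentagons $P_0,P_1,P_2$.

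The only delicate point is reconciling the statement's wording ``with center at an umbilic point'' with the fact that $\Gamma_3$ is defined in Section 3 as a rotation centered at the origin. This is not a real obstacle: since every umbilic has $u=v=0$, the conjugations $T_{\pm\pm}^{-1}\circ \Gamma_3 \circ T_{\pm\pm}$ from Remark \ref{conjugaciontraslacion} act on points of $T_r(2)$ exactly as $\Gamma_3$ does, so the two descriptions coincide. I do not foresee any substantive difficulty beyond this bookkeeping.
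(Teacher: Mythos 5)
Your proposal is correct and follows essentially the same route as the paper: the paper likewise takes $S_2=\Gamma_3(S_1)$ and obtains $S_3$ as the $4\pi/3$-rotation (written there as $T_{-U_3}\circ\rho_{4\pi/3}\circ T_{U_3}$, which, as you observe, acts as $\Gamma_3^2$ because the umbilics have $u=v=0$), recording the resulting explicit arcs $S_{2i}$, $S_{3i}$ lying over the lines $v+\sqrt3\,u=0$ and $v-\sqrt3\,u=0$. Your write-up actually supplies more justification (invariance of the $\nu$-principal foliation under $\Gamma_3$, passage through all four umbilics, and distinctness of the three curves) than the paper's one-line appeal to invariance of curvature lines under rotations.
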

\begin{proof} We know that the curvature lines are invariant under rotations. By applying $\Gamma_3$ to the separatrix $S_1$, 
we obtain a new curvature line $S_2:I\to \mathbb R^4$ associated to $p=\sqrt 3$ given by:

\begin{align*}
  S_{21}(t)&=(-\frac{\sqrt{r^2-t^2 - t^3}}{\sqrt 2}, -\frac{\sqrt{r^2-t^2+t^3}}{\sqrt 2},-\frac{t}{2},\frac{\sqrt 3}{2}t),\\
  S_{22}(t)&=(\frac{\sqrt{r^2-t^2 - t^3}}{\sqrt 2}, -\frac{\sqrt{r^2-t^2 + t^3}}{\sqrt 2},-\frac{t}{2},\frac{\sqrt 3}{2}t),\\
  S_{23}(t)&=(\frac{\sqrt{r^2-t^2-t^3)}}{\sqrt 2}, \frac{\sqrt{r^2- t^2 + t^3}}{\sqrt 2},-\frac{t}{2},\frac{\sqrt 3}{2}t),\\
  S_{24}(t)&=(-\frac{\sqrt{r^2-t^2-t^3}}{\sqrt 2}, \frac{\sqrt{r^2-t^2+t^3}}{\sqrt 2},-\frac{t}{2},\frac{\sqrt 3}{2}t),
\end{align*}

where $U_i \in S_{2i}(t),\;i=1,2,3,4.$

In the same way, applying $T_{-U_3}\circ \rho_{4\pi/3}\circ T_{U_3}$ we obtain a new curvature line $S_3:I\to \mathbb R^4$ associated to $p=-\sqrt 3$ and given by:

\begin{align*}
  S_{31}(t)&=(-\frac{\sqrt{r^2-t^2 - t^3}}{\sqrt 2}, -\frac{\sqrt{r^2-t^2 + t^3}}{\sqrt 2},-\frac{t}{2},-\frac{\sqrt 3}{2}t),\\
  S_{32}(t)&=(\frac{\sqrt{r^2-t^2 - t^3}}{\sqrt 2}, -\frac{\sqrt{r^2-t^2 + t^3}}{\sqrt 2},-\frac{t}{2},-\frac{\sqrt 3}{2}t),\\
  S_{33}(t)&=(\frac{\sqrt{r^2-t^2 - t^3}}{\sqrt 2}, \frac{\sqrt{r^2-t^2 + t^3}}{\sqrt 2},-\frac{t}{2},-\frac{\sqrt 3}{2}t),\\
  S_{34}(t)&=(-\frac{\sqrt{r^2-t^2 - t^3}}{\sqrt 2}, \frac{\sqrt{r^2-t^2 + t^3}}{\sqrt 2},-\frac{t}{2},-\frac{\sqrt 3}{2}t),
\end{align*}

where $U_i \in S_{3i}(t),\;i=1,2,3,4.$ 
\end{proof}

\medskip

Proof of statement $iii)$. We begin by describing a $CW$ complex structure  which includes the $\nu$-lines of curvature 
and the $\nu$-umbilics as cells of $T_r(2)$.      

\begin{lem}\label{CW-structure}
The set $P_{++} \subset T_r(2)$ admits the following $CW$-complex structure.

\noindent i) Five 0-cells, one of them is an umbilic point.

\noindent ii) Five 1-cells, all of them are lines of curvature. Among them, two are separatrices containing the umbilic point.

\noindent iii) One 2-cell.  
\end{lem}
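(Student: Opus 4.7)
The plan is to pull back a $CW$-structure from the projected pentagonal region $P_0\subset H_{uv}$ via the homeomorphism $\varphi_{++}|_{P_0}\colon P_0\to P_{++}$, and to identify each cell either as a piece of the $\nu$-principal foliation or as a connected component of the fixed set of a reflection symmetry of $T_r(2)$.

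For part $(i)$, I would take the five $0$-cells to be the corners of the pentagon $P_0$, lifted through $\varphi_{++}$ to $P_{++}$: the vertex $V_0=\varphi_{++}(0,0)=\tfrac{r}{\sqrt 2}(1,1,0,0)$, which by Proposition \ref{4.1} is the unique umbilic in $H_{++}$; the midpoint $V_1$ of the arc $X^1$; the intersection $V_2=X^1\cap Y^1$; the intersection $V_3=Y^1\cap X^2$; and the midpoint $V_4$ of the arc $X^2$.

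For part $(ii)$, I must verify that each of the five edges $e_1,\dots,e_5$ is a curvature line. The two edges meeting the umbilic, $e_1\subset\{v=0\}$ and $e_5\subset\{v=-\sqrt 3\,u\}$, are sub-arcs of the pathwise separatrices $S_1$ and $S_2$ of Propositions \ref{circ1} and \ref{circ2}: the branch $S_{13}(t)$ satisfies $v\equiv 0$, while $S_{23}(t)=(\ldots,-t/2,\tfrac{\sqrt 3}{2}t)$ lies on $v=-\sqrt 3\,u$, and both arcs pass through $V_0=U_3$ with the two slopes $p=0$ and $p=-\sqrt 3$ supplied by the preceding umbilic-slope lemma. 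For the remaining edges $e_2\subset X^1$, $e_3\subset Y^1$, $e_4\subset X^2$ I would invoke the reflections $\Gamma_2\colon(x,y,u,v)\mapsto(-x,y,u,v)$ and $\Gamma_1\colon(x,y,u,v)\mapsto(x,-y,u,v)$. Both are Euclidean isometries of $\mathbb R^4$ leaving $F$ and $G_r$ invariant, hence preserving $T_r(2)$, the gradient normal $\nu=\nabla F/\|\nabla F\|$ and the $\nu$-shape operator $S_\nu$. At a fixed point $p$ of $\Gamma_i$, the restriction $d\Gamma_i|_{T_pT_r(2)}$ is an orthogonal involution with eigenvalues $\pm 1$ which commutes with the self-adjoint $S_\nu$, so its two eigenlines must coincide with the principal directions; consequently the fixed loci $\{x=0\}\cap T_r(2)=X^1\cup X^2\cup X^3$ and $\{y=0\}\cap T_r(2)=Y^1\cup Y^2\cup Y^3$ consist entirely of $\nu$-lines of curvature, yielding $e_2,e_3,e_4$ and exhibiting $e_1,e_5$ as the two separatrix arcs through the umbilic.

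For part $(iii)$, the interior of $P_{++}$ projects homeomorphically to the open convex pentagon $P_0^{\circ}$ and is therefore an open topological disc, whose closure attaches to the $1$-skeleton along the Jordan curve $e_1\cup\cdots\cup e_5$, producing a single $2$-cell. The main obstacle I anticipate is the symmetry argument in $(ii)$: one must justify carefully that $d\Gamma_i$ commutes with $S_\nu$ at a fixed point, which follows because $\Gamma_i$ is an ambient isometry preserving both $T_r(2)$ and the normal field $\nabla F$, so its action on the normal bundle is compatible with the shape operator.
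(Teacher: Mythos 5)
Your cell decomposition is exactly the one the paper uses (the five vertices of $P_0$ lifted by $\varphi_{++}$, the five edges $a_1,\dots,a_5$ projecting to $e_1,\dots,e_5$, and the interior of $P_{++}$ as the single $2$-cell), and your identification of $a_1$ and $a_5$ with the separatrix arcs $S_{13}$ and $S_{23}$ of Propositions \ref{circ1} and \ref{circ2} is also the paper's. Where you genuinely diverge is in certifying that the remaining edges $a_2,a_3,a_4$ are curvature lines: the paper simply asserts that ``a straightforward computation'' with the differential equation of Corollary \ref{EcuaDiff} does this, whereas you argue via the reflections $\Gamma_1,\Gamma_2$, observing that they are ambient isometries preserving $F$ and $G_r$, hence equivariant for $\nabla F$ and for $S_\nu$, so that at a fixed point the $\pm1$ eigenlines of $d\Gamma_i|_{T_pT_r(2)}$ are $S_\nu$-invariant and therefore principal; the fixed loci $\{x=0\}\cap T_r(2)$ and $\{y=0\}\cap T_r(2)$ are then unions of curvature lines. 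This symmetry argument is correct (one only needs to note, as is visible from the explicit charts, that the surface meets each reflecting hyperplane transversally along the interiors of these edges, so that $d\Gamma_i|_{T_pT_r(2)}$ is not the identity there), and it is more conceptual than the paper's computation: it explains \emph{why} the boundary arcs of the hexagonal regions are curvature lines and would apply verbatim to the higher-genus links discussed at the end of the paper. The trade-off is that it covers only the fixed loci, so you still need the separate verification (borrowed from Proposition \ref{circ1}) for $a_1$ and $a_5$, while the paper's single computation with Corollary \ref{EcuaDiff} handles all five edges uniformly. Two cosmetic points: convexity of $P_0$ is neither true as stated (its boundary contains arcs of cubics) nor needed --- that $P_0$ is a closed topological disc bounded by the Jordan curve $e_1\cup\dots\cup e_5$ suffices; and the slope of $e_5$ at the umbilic is $-\sqrt3$, consistent with the slope lemma, as you correctly note.
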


\begin{proof}
We begin by defining the 2-cell as the interior of $P_{++}$, see (\ref{pentagono}).
Then, the 1-cells are described as follows:
\begin{eqnarray*}
a_1& = & \{ (x,y,u,v) \in P_{++}; v=0\}, \ \ a_2=\{ (x,y,u,v) \in P_{++}; x=0, \ u>0\} \\
a_3& = &  \{ (x,y,u,v) \in P_{++}; y=0\}, \ \ a_4=\{ (x,y,u,v) \in P_{++}; x=0, \ v>0\} \\
&& \ \ \ \ \ \ \ a_5 =    \{ (x,y,u,v) \in P_{++}; v=-\sqrt{3}u\}.
\end{eqnarray*}
Observe that since the projection of $P_{++}$ onto the $uv$-plane is $P_0$, 
the projections of these 1-cells  are $e_1, ... , e_5$, respectively.
We point out that, according to the differential equation of $\nu$-lines of curvature of
Corollary $\ref{EcuaDiff}$, a straightforward computation implies that $a_i,\ i=1,...,5$ 
are $\nu$-lines of curvature of $T_r(2)$. Moreover, the curves $S_{13}$ defined in Proposition $\ref{circ1}$ and $S_{23}$ defined 
in Proposition $\ref{circ2}$
are  parameterizations of $a_1$ and $a_5$, respectively. Thus, these 1-cells are separatrices. Finally, the 0-cells are defined 
as the preimages of the vertices of the pentagonal region $P_0$ under the projection. Among them, the one corresponding to the origin 
in the $uv$-plane is the umbilic $(\frac{r}{\sqrt{2}}, \frac{r}{\sqrt{2}},0,0)$.
\end{proof}

The $CW$-complex structure of $P_{++}$ in this lemma allows us to state the following: 

\begin{prop}
The double torus $T_r(2)$ admits a $CW$-complex structure such that the 0-skeleton includes the umbilic points and the 1-skeleton
the separatices.
\end{prop}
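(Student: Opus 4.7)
The plan is to transport the $CW$-complex structure from the fundamental pentagonal region $P_{++}$ established in Lemma \ref{CW-structure} to the entire double torus by means of the symmetry group $\mathcal G$. The key observation is that the $\nu$-principal foliation and the set of umbilics are invariant under the action of $\mathcal G$ on $T_r(2)$ (the symmetries preserve both $F$ and $G_r$ and act isometrically), so the image under any $g\in\mathcal G$ of a 1-cell lying on a line of curvature is again such a cell, and the image of the umbilic 0-cell is again an umbilic.

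First I would enumerate the pieces needed to cover $T_r(2)$. By construction, $T_r(2)=H_{++}\cup H_{+-}\cup H_{-+}\cup H_{--}$, and each hexagonal region decomposes into three pentagonal regions obtained from a base pentagon by applying $\Gamma_3$ and $\Gamma_3^2$, conjugated by the translations of Remark \ref{conjugaciontraslacion} so that the rotation axis is the appropriate umbilic. This gives twelve pentagonal 2-cells. Pushing forward the cellular decomposition of $P_{++}$ by the corresponding element of $\mathcal G$ equips each of these twelve pieces with a list of five 1-cells and five 0-cells.

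Next I would verify the gluing. Inside a single hexagon, adjacent pentagons meet along images of the 1-cells $a_2$ or $a_4$ contained in $\{x=0\}$, and these are fixed as sets by the rotation that identifies them. Across different hexagons the adjacent pieces are glued along arcs contained in $\{x=0\}$, $\{y=0\}$, or in the boundary curves $X^i, Y^j$ of the hexagonal projection, all of which are fixed by the relevant reflections $\Gamma_1, \Gamma_2$ or by the corresponding compositions. A cell-by-cell check shows that on every shared edge the two candidate 1-cells coincide as subsets of $T_r(2)$, so the attaching maps fit together and yield a genuine $CW$-complex with underlying space $T_r(2)$.

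Finally I would check the two remaining claims. For the 0-skeleton: the umbilic $\tfrac{r}{\sqrt 2}(1,1,0,0)$ is a vertex of $P_{++}$, and the other three umbilics $\tfrac{r}{\sqrt 2}(\pm 1,\pm 1,0,0)$ are its images under $\Gamma_1$, $\Gamma_2$, $\Gamma_1\circ\Gamma_2$, hence appear as 0-cells of the corresponding pentagons. For the 1-skeleton: the arcs $S_{1i}, S_{2i}, S_{3i}$ of Propositions \ref{circ1} and \ref{circ2} decompose, arc by arc, as unions of images of $a_1$ and $a_5$ under elements of $\mathcal G$, so each of the three pathwise separatrices sits inside the 1-skeleton. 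The main obstacle I anticipate is the combinatorial bookkeeping of the twelve pentagons and their adjacencies: making sure every shared edge is identified without orientation mismatch, and that the 0-cells where three or more pentagons meet are counted only once, requires a careful case analysis leaning on the fact that the fixed-point sets of $\Gamma_1$ and $\Gamma_2$ are precisely the boundary strata along which the hexagonal regions are glued.
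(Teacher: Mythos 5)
Your proposal is correct and follows essentially the same route as the paper: both push forward the cell decomposition of $P_{++}$ from Lemma \ref{CW-structure} by the action of $\mathcal G$, obtaining twelve pentagonal 2-cells whose orbit-images of the umbilic vertex and of the separatrix edges $a_1, a_5$ populate the 0- and 1-skeleton. The paper settles the combinatorial bookkeeping you flag as the main obstacle simply by recording the orbit counts (sixteen 0-cells, thirty 1-cells, twelve 2-cells) and confirming them against the Euler characteristic $12-30+16=-2=\chi(T_r(2))$.
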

\begin{proof}
The cells of the $CW$-complex structure of $T_r(2)$ 
are the images of the cells of the pentagonal region $P_{++}$
under the action of the group $\mathcal G$. We describe them as follows: 
The 0-skeleton is constituted by sixteen 0-cells. Namely, four of the orbit of $v_0$, 
six for the orbit of $v_1$,
and three of the orbit $v_i, i=2,3$. Observe that the umbilic points belong to the orbit of $v_0$.
The 1-skeleton is constituted by thirty 1-cells defined by the orbits of the boundary of the pentagonal region. Namely,  
the orbit of $a_1$ consists of twelve 1-cells including $a_5$; all of them are separatrices. The orbit of $a_i$ consists of   
six 1-cells for each $i=2,3,4$.   
The 2-skeleton consists of
twelve 2-cells defined by the orbit of $P_{++}$. 
By a direct computation, it can be verified that this decomposition provides the Euler-characteristic of the double torus:
$$12 -30 +16= -2= \chi(T_r(2)).$$
\end{proof}

\begin{coro} 
The first homology group of the double torus $T_r(2)$, $H_1(T_r(2))$, with coefficients in $\mathbb Z_2$ 
has a basis whose representatives are cycles constituted by pathwise separatrices.
\end{coro}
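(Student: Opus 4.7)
The plan is to combine the CW-complex structure from the preceding proposition with the non-degenerate symplectic intersection pairing on $H_1(T_r(2);\mathbb{Z}_2)\cong\mathbb{Z}_2^{4}$ to exhibit four cycles of pathwise separatrices whose classes form a basis.

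I would begin with the classes $[S_i]\in H_1(T_r(2);\mathbb{Z}_2)$, $i=1,2,3$, of the three pathwise separatrix circles produced in Propositions \ref{circ1}--\ref{circ2}. For $i\neq j$, $S_i\cap S_j$ consists exactly of the four umbilics, at which the two curves meet transversely since the three $D_3$-separatrix directions at each umbilic are pairwise distinct; hence $[S_i]\cdot[S_j]\equiv 4\equiv 0\pmod 2$. Together with the vanishing of mod-$2$ self-intersection on an oriented closed surface, this shows that $\langle[S_1],[S_2],[S_3]\rangle$ is isotropic, and therefore of dimension at most two in the non-degenerate symplectic space $H_1(T_r(2);\mathbb{Z}_2)$. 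Two of the three $[S_i]$ thus contribute two independent basis vectors.

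To produce the other two basis vectors, I would study the subgraph $K\subset T_r(2)$ built from the twelve separatrix $1$-cells (the orbit of $a_1$) together with their endpoints. Using the $D_3$ character of the umbilics and the $\mathcal{G}$-action, each umbilic has valence three in $K$, the six vertices in the orbit of $v_1$ have valence two, and the vertices in the orbits of $v_2,v_3$ are isolated. The non-isolated part of $K$ then decomposes as the disjoint union of two theta-graphs, each joining a pair of umbilics by three internally disjoint length-$2$ paths, yielding $\dim H_1(K;\mathbb{Z}_2)=4$. I would select two graph-cycles $C_1,C_2$ in $K$ whose images in $H_1(T_r(2);\mathbb{Z}_2)$ are independent of the two retained $[S_i]$, verifying independence by computing mod-$2$ intersection numbers against suitable transverse cycles (obtained, for instance, as $\mathcal{G}$-translates of a pathwise separatrix) and invoking the non-degeneracy of the intersection pairing.

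By construction, each of the four resulting cycles is a closed loop of arcs of $S_1,S_2,S_3$ and thus qualifies as a cycle constituted by pathwise separatrices; four such cycles with linearly independent classes then form the required $\mathbb{Z}_2$-basis of $H_1(T_r(2);\mathbb{Z}_2)$.

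The main obstacle is controlling the kernel of the inclusion-induced map $H_1(K;\mathbb{Z}_2)\to H_1(T_r(2);\mathbb{Z}_2)$. A direct boundary calculation using the pentagonal $2$-cells shows that the dual graph on the twelve pentagons, with the eighteen non-separatrix $1$-cells as edges, has three connected components given by the three $\langle\Gamma_1,\Gamma_2\rangle$-orbits of $P_{++}$, and the indicator $2$-chain of each component has boundary supported on separatrix $1$-cells. This produces a two-dimensional kernel inside $H_1(K;\mathbb{Z}_2)$, so the two graph-cycles $C_1,C_2$ must be chosen outside this kernel; by the threefold $\Gamma_3$-symmetry, this reduces to a single combinatorial check.
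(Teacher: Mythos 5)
There is a genuine gap here, and it is fatal to the strategy as written. The step ``two of the three $[S_i]$ thus contribute two independent basis vectors'' does not follow from anything you prove: isotropy of $\langle [S_1],[S_2],[S_3]\rangle$ only bounds its dimension \emph{above} by two and gives no lower bound. In fact all three classes vanish. Each pathwise separatrix circle of Propositions \ref{circ1} and \ref{circ2} is the full intersection of $T_r(2)$ with a hyperplane through the origin ($v=0$ for $S_1$, $v=\mp\sqrt{3}\,u$ for $S_2,S_3$), so it is the boundary of the compact subsurface of $T_r(2)$ on which the corresponding linear form is nonnegative; hence $[S_i]=0$ in $H_1(T_r(2);\mathbb Z_2)$ for $i=1,2,3$. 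Your first two proposed basis vectors are therefore zero, and the mutual intersection count $4\equiv 0$ is just a symptom of this.

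The second half of the construction cannot compensate. Your description of the separatrix graph $K$ is correct (two theta-graphs, $\dim H_1(K;\mathbb Z_2)=4$), but the two theta-components have the \emph{same} image in $H_1(T_r(2);\mathbb Z_2)$: a cycle through the two lifts with $y>0$ of the midpoints of $X^i$ and $X^j$, and the corresponding cycle through the lifts with $y<0$, project to the same pair of rays in the $uv$-plane and hence have identical mod-$2$ intersection numbers with the three circles $T_r(2)\cap\{x=0\}$ lying over $X^1,X^2,X^3$ and with the three circles $T_r(2)\cap\{y=0\}$ lying over $Y^1,Y^2,Y^3$; since these six circles contain a basis, nondegeneracy of the pairing forces the two classes to coincide. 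So the image of $H_1(K;\mathbb Z_2)$ is only two-dimensional (consistent with the two-dimensional kernel you yourself identify), and at most two independent classes can come from $K$. Altogether your cycles span at most a two-dimensional subspace, not all of $H_1(T_r(2);\mathbb Z_2)\cong\mathbb Z_2^4$. The missing half --- the Lagrangian generated by the gluing circles over the $X^i$ --- is exactly what the paper's cycles $c_1=d_1+d_2+\Gamma_1(d_1+d_2)$ and $c_2=\Gamma_3(c_1)$ realize; these are built from the \emph{other} family of separatrices at each umbilic (the arcs projecting to the rays of slope $\pm\sqrt3$ through the origin and to the negative $u$-axis), which are not $1$-cells of the CW structure and do not lie in your graph $K$. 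Any correct argument must bring those arcs into play; restricting to the orbit of $a_1$ together with the circles $S_i$ cannot yield a basis.
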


\begin{proof}
We provide a basis $\{\alpha_i\}_{i=1}^4$ of $H_1(T_r(2))$ 
whose representatives are the cycles $\{c_i\}_{i=1}^4$, which are defined as follows.

\noindent First, we consider
\begin{eqnarray*}
c'_1= b_1 +b_2,\ c'_2= \Gamma_3(c'_1), 
\end{eqnarray*}
where $b_1= a_4 + \Gamma_3(a_2)$ and $b_2= \Gamma_1(b_1)$. 
Then, we define
\begin{eqnarray*}
c_1= d_1 +d_2+ \Gamma_1(d_1 +d_2),\ c_2= \Gamma_3(c_1), 
\end{eqnarray*}
where, $d_1$ is the separatrix whose projection is diagonal of slope $\sqrt{3},\ u,v \geq0$
and $d_2= \Gamma_3(d_1)$. 
Thus, $c_i,\ i=1,2.$ is a pathwise separatrix homologous to $c'_i,\ i=1,2.$ 
We define $d_3= a_3 + \Gamma_2(a_3)$ and $d_4= \Gamma_3(d_3)$.
Thus, we have defined a basis $\{c_1', c_2', d_3, d_4  \}$  of  $H_1(T_r(2))$ whose cycles are constituted by 
$\nu$-lines of curvature and belong to the $1$-skeleton. 
Finally, by choosing the representatives 
$$c_3=a_1+a_5+ \Gamma _2(a_1+a_5)\ {\rm and}\ c_4=\Gamma_3(c_3),$$
since $c_i$ is homologous to $d_i$, $i=3,4$, we get the desired basis of $H_1(T_r(2))$ whose cycles $\{c_i \}_{i=1}^4$ are 
pathwise separatrices.
\end{proof}

Proof of statement $iv)$. In the proof of this statement, we will use some argument involving properties of the flow of a planar vector field. 
The Poincar\'e Bendixon theorem will play a relevant role. For the basic notions and the proof of this theorem see \cite{HirschSmale}.  

\begin{prop}\label{closedsolutions}
The lines of curvature on the complement of the separatices of $T_r(2)$ are immersions of $\mathbb S^1$. 
\end{prop}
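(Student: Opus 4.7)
The plan is to combine the $CW$-decomposition from Lemma~\ref{CW-structure} and its $\mathcal{G}$-orbit extension to $T_r(2)$ with a Poincar\'e--Bendixson style argument on each 2-cell. First I would observe that since the four umbilics all lie on separatrices, the complement of the closure of the separatrices is an open, umbilic-free subset of $T_r(2)$ on which both the maximal and minimal $\nu$-principal direction fields are smooth, non-vanishing and mutually orthogonal. Hence every leaf in this complement is a regular integral curve of one of the two orthogonal line fields, and the question reduces to showing it closes up after finite length.

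Next I would use the twelve pentagonal 2-cells (the $\mathcal{G}$-orbit of $P_{++}$) as building blocks. Each open pentagon contains no umbilic, and its boundary consists entirely of $\nu$-lines of curvature by Lemma~\ref{CW-structure}. Orthogonality of the two principal foliations forces each boundary edge to be a leaf of exactly one family, and the two families alternate at each non-umbilic vertex. A leaf $\gamma$ of, say, the maximal principal foliation through an interior point of a pentagon cannot cross a max-type boundary edge, because distinct leaves of the same foliation never meet; so $\gamma$ must exit transversally through a min-type edge. Via the $\mathcal{G}$-action identifying that edge with the corresponding edge of a neighboring pentagon, $\gamma$ extends into the adjacent 2-cell and the same analysis applies.

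To deduce closedness I would keep track of the combinatorial itinerary of pentagons visited by $\gamma$ together with the entry/exit edges. Since $\mathcal{G}$ is finite and acts transitively on the twelve 2-cells, there are only finitely many possible itinerary words, so the sequence is eventually periodic; combining this with the uniqueness of the integral curve through a chosen transversal point and with $\mathcal{G}$-equivariance of the principal foliations, $\gamma$ must in fact be periodic as a subset of $T_r(2)$, i.e.\ an immersion of $\mathbb{S}^1$. The same argument applies with the roles of max and min interchanged, handling both families simultaneously.

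The main obstacle will be excluding pathological asymptotic behavior, in particular a leaf that spirals toward a separatrix without ever reaching it, or a Denjoy-type non-closed recurrent leaf (which \emph{a priori} is not forbidden on a surface of genus two). This is exactly where the Poincar\'e--Bendixson theorem from \cite{HirschSmale} enters: restricted to a single closed pentagonal 2-cell, the foliation is a non-singular planar foliation on a topological disk whose boundary is itself a union of leaves and transverse arcs of the same foliation. Any trajectory entering through a transverse min-type edge must therefore exit through another transverse min-type edge in finite time, ruling out accumulation onto a max-type boundary leaf inside the disk. With this local finiteness inside each pentagon established, the $\mathcal{G}$-equivariant gluing described above closes the argument globally.
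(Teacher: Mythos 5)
Your reduction to the pentagonal cells, the orthogonality bookkeeping on the boundary edges, and the use of Poincar\'e--Bendixson inside each cell to force a leaf entering through a transverse edge to exit through another transverse edge in finite time all match the paper's strategy (the paper's first lemma in this proof does exactly this: a trajectory starting on $e_2$ avoids a tubular neighborhood of $e_1\cup e_3\cup e_5$ and must hit $e_4$). However, the final step of your argument has a genuine gap. The claim that ``$\mathcal{G}$ is finite, so there are finitely many itinerary words, so the itinerary is eventually periodic, so $\gamma$ is periodic'' does not follow: the itinerary of cells is a discrete datum, but the entry point on each transverse edge is a continuous parameter, and a leaf can revisit the same cell through different points of the same edge forever without closing up. (The irrational linear foliation on the flat torus, cut into finitely many squares, has a periodic cell-itinerary and no closed leaves; on a genus-two surface nothing in your argument excludes analogous recurrent, non-closed behavior --- you name this obstacle yourself but never actually remove it.) Uniqueness of integral curves gives you a well-defined first-return map on the transversal edge; it does not give you that this map has the particular point you started from as a fixed point.

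The paper closes this gap with a symmetry argument that your proposal is missing. After the projected trajectory $\gamma_p$ runs from $p\in e_2$ across $P_0$ and reaches $e_4$ at a point $r$, the actual curve $\overline{\gamma}_p$ on $T_r(2)$ crosses from the sheet $H_{++}$ ($x\ge 0$) into the sheet $H_{-+}$ ($x\le 0$). Because the surface and the principal foliations are invariant under the reflection $x\mapsto -x$, the continuation of $\overline{\gamma}_p$ in $H_{-+}$ projects onto the \emph{same} planar arc $\gamma_p([0,t_0])$ traversed in the opposite direction. The paper's second lemma (the uniqueness statement for trajectories from $e_2$ meeting $e_4$ at a common point) then forces the returning branch to land exactly at $p$, so the first-return map is the identity and $\overline{\gamma}_p$ is a closed curve contained in $P_{++}\cup P_{-+}$. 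This reflection-retracing step is the essential idea that converts ``the leaf crosses the pentagon and continues'' into ``the leaf closes up,'' and it is what you would need to add to make your combinatorial scheme work.
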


\noindent {\bf Proof.} Let $\mathcal L_1$ and $\mathcal L_2$ be the foliations of maximal and minimal $\nu$-lines of curvature, respectively, in the closure of the 
region $P_0$  (see figure \ref{Pentagon}). We assume that $e_1$ and $e_5$ are separatrices in $\mathcal L_1$ containing the origin in the $uv$-plane. Thus, $e_2$ and $e_4$ belong to  
$\mathcal L_2$ and $e_3$ belongs in turn to $\mathcal L_1$
. 
In the interior of $P_0$, the equation of  maximal (respectively minimal) $\nu$-lines of curvature is defined by a vector field $X_1$ without singularities.  
Moreover, we can assume that $X_1$ extends to $e_2$ pointing towards the region $P_0$ along $e_2$. Consequently, each solution with initial condition at $e_2$
never returns to this edge of $P_0$
.

\begin{lem}
The $\omega$-limit of the solution $\gamma_p$ of $X_1$ with initial condition $ \gamma_p(0)=p \in e_2$ does not contain points in $e_1 \cup e_3\cup e_5$. 
Moreover, this solution intersects $e_4$.
\end{lem}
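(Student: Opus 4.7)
The plan is to combine uniqueness of ODE solutions with the Poincar\'e--Bendixson theorem applied inside the disk $\overline{P_0}$. First I would record the confinement: since $e_1$, $e_3$ and $e_5$ are themselves integral curves of $X_1$, uniqueness of solutions forbids $\gamma_p$ from ever crossing them, and since $X_1$ points strictly inward along $e_2$ the trajectory cannot return to $e_2$ for $t>0$. Consequently $\gamma_p$ remains in $\overline{P_0}$ for all forward time unless it leaves through $e_4$.

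Next I would prove the $\omega$-limit claim by contradiction. Suppose $\omega(\gamma_p)$ contains a regular point $q\in e_i$ with $i\in\{1,3,5\}$. Invariance of $\omega(\gamma_p)$ and the fact that $e_i$ is a single orbit of $X_1$ force the whole open arc $e_i$ to lie in $\omega(\gamma_p)$, and closedness then adds its endpoints. The only singular point of $X_1$ in $\overline{P_0}$ is the umbilic vertex where $e_1$ and $e_5$ meet, so if $i=3$ at least one endpoint of $e_3$ is a regular point $v$ of $X_1$, and the orbit through $v$, which must also lie in $\omega(\gamma_p)\subset\overline{P_0}$, instantly escapes $\overline{P_0}$ through the complementary region, a contradiction. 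If instead $i\in\{1,5\}$, the umbilic lies in $\omega(\gamma_p)$ and the $D_3$ sector structure established in Proposition~\ref{teoindex} would be used: the local phase portrait consists of hyperbolic sectors bounded by the three separatrices, so the only orbits asymptotic to the umbilic are those three separatrices themselves. A flow-box argument near a regular point of $e_1$ (respectively $e_5$) then shows that $\gamma_p$ would have to coincide with one of these separatrices, contradicting $\gamma_p(0)\in e_2$.

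Finally, I would close the argument on $e_4$. If $\gamma_p$ never crossed $e_4$, the forward orbit would stay in $\overline{P_0}$, so $\omega(\gamma_p)$ would be a nonempty, compact, connected, invariant subset of $\overline{P_0}$; by the previous paragraph it is disjoint from $e_1\cup e_3\cup e_5$, and invariance forces it to be disjoint from the transverse arcs $e_2$ and $e_4$. Thus $\omega(\gamma_p)$ lies in the open pentagon, where $X_1$ has no singularities, so Poincar\'e--Bendixson forces $\omega(\gamma_p)$ to be a periodic orbit. Since $P_0$ is a topological disk, such a periodic orbit would enclose a singular point of $X_1$, which does not exist in the interior. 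Hence $\gamma_p$ leaves $\overline{P_0}$ in finite time, and the only admissible exit edge is $e_4$.

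The main obstacle is the second step, and specifically the case $i\in\{1,5\}$: ruling out that a non-separatrix trajectory accumulates on $e_1\cup e_5$ requires the explicit $D_3$ local model at the umbilic together with a careful flow-box argument, whereas the other cases reduce quickly to standard facts from planar dynamics.
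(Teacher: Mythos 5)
Your overall architecture (confinement, then a contradiction via invariance of the $\omega$-limit set, then Poincar\'e--Bendixson plus the absence of interior singularities to force an exit through $e_4$) is sound, and your treatment of the second claim coincides with the paper's. For the first claim, however, you take a genuinely different route. The paper does not argue on $\omega$-limit sets at all: it builds an explicit barrier. Using the continuity of the transition (Poincar\'e) map from a small cross-section $\Sigma_3^{\delta}(0)$ near the umbilic to $e_2$, it shows that every orbit entering a sufficiently thin tubular neighborhood of $e_1\setminus\{(0,0)\}$ must have started on the sub-arc $e_2^{\epsilon=|p|/2}(p_0)$ of $e_2$ around $p_0=e_1\cap e_2$; since $p$ lies outside that sub-arc, uniqueness of solutions prevents $\gamma_p$ from ever entering the tube, and the same is done for $e_3$ and $e_5$. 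That construction produces a quantitative forbidden neighborhood and sidesteps any discussion of the local phase portrait at the umbilic, whereas your argument leans on it.

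The soft spot is exactly the case $i\in\{1,5\}$ that you flag, and as written it is a genuine gap. The implication ``the umbilic lies in $\omega(\gamma_p)$'' $\Rightarrow$ ``$\gamma_p$ is asymptotic to the umbilic'' is false in general: by the Poincar\'e--Bendixson structure theorem, an $\omega$-limit set containing both a singular point and regular points is a graphic (polycycle), and $\gamma_p$ could spiral toward such a graphic without converging to the umbilic and without coinciding with a separatrix. The hyperbolic-sector structure of a $D_3$ point excludes convergence to the umbilic, but not accumulation on a graphic through it, so your flow-box step does not yet yield the contradiction. The repair is available and is essentially your own $e_3$ argument: if $\omega(\gamma_p)$ contained the regular points of $e_1$ (resp.\ $e_5$), invariance would force it to contain the full leaf through the non-umbilic endpoint $e_1\cap e_2$ (resp.\ $e_5\cap e_4$), and on the surface that leaf continues into the adjacent chart, leaving $\overline{P_{++}}$ and contradicting $\omega(\gamma_p)\subset\overline{P_{++}}$; alternatively, that endpoint lies on the transversal arc $e_2$, which $\gamma_p$ crosses only at $t=0$. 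Either patch closes the case, but it must be supplied, and this part of the argument should be carried out on $T_r(2)$ itself rather than in the $uv$-projection, where the chart (and the frame $V_1^1,V_2^1$) degenerates along $x=0$ and $y=0$.
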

\noindent {\bf Proof of the lemma.} 
We show that there exists a tubular neighborhood of $e_1 \setminus \{(0,0)\}$ which does not intersect $\gamma_p(t),\ t>o.$
Consider the Poincar\'e application

\begin{eqnarray*} 
\pi: \Sigma_3^r(0) \rightarrow e_2
\end{eqnarray*}
where $\Sigma_3(0)$ is the minimal $\nu$-line of curvature in $P_0$ extending the separatrix in the complement of the closure of $P_0$, and where 
$\Sigma_3^r(0)$ is a neighborhood of the origin in $\Sigma_3(0)$ of ratio $r$. Since this application is continuous, we can choose $\delta>0$ such that
$\pi\left( \Sigma_3^{\delta}(0) \right) \subset e_2^{\epsilon=|p|/2}(p_0)$, where $p_0= e_1\cap e_2$ and $e_2^{\epsilon=|p|/2}(p_0)$ is the neighborhood 
of $p_0 \in e_2$ with ratio equal to a half of the distance from $p$ to $p_0$. Since $p \in e_2$ is not in $e_2^{\epsilon=|p|/2}(p_0)$, the 
uniqueness of the solutions of $X_1$ implies that $\gamma_p(0)$ never intersects the tubular neighborhood of $e_1$, defined by the solutions of $X_1$
with initial conditions in  $e_2^{\epsilon=|p|/2}(p_0)$. Analogous arguments allow us to extend such a tubular neighborhood to the whole union
$e_1 \cup e_3\cup e_5$. 

Now we prove the second statement. Suppose that $\gamma_p$ does not intersect $e_4$. Observe that
the $\omega$-limit of the solution $\gamma_p$ is closed
and contained in $P_0$ and thus bounded without singular points. Then the Poincar\'e-Bendixon theorem implies that $\gamma_p$ is a closed solution. 
This implies in turn that there is a singular point in the bounded region determined by this solution. So we get a contradiction.   
\qed

\begin{lem}\label{unicidad}
If $p,\ q \in e_2$ and $r \in e_4$ are such that $\gamma_p(t_0)= r= \gamma_q(t_1)$, where $t_i, i=0,1,$ is the first value 
of the parameter where the corresponding 
solution intersects $e_4$, 
then $p=q$ and $t_0=t_1$. 
\end{lem}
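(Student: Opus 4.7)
The plan is to combine the uniqueness theorem for ODEs with the inward-pointing transversality of $X_1$ along $e_2$ assumed just before the lemma. From $\gamma_p(t_0)=r=\gamma_q(t_1)$ and uniqueness of the Cauchy problem for $X_1$, the curves $\gamma_p$ and $\gamma_q$ differ only by a time translation: $\gamma_p(s)=\gamma_q(s+t_1-t_0)$ whenever both sides are defined. Setting $s=0$ and writing $\tau:=t_1-t_0$, which may be assumed non-negative after relabeling $p$ and $q$, gives $p=\gamma_q(\tau)$. The lemma follows once I show $\tau=0$.

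The key step is to show that $\gamma_q(s)\in\mathrm{int}(P_0)$ for every $s\in(0,t_1)$. Let $\sigma:=\inf\{s>0:\gamma_q(s)\in\partial P_0\}$; since $\gamma_q(t_1)=r\in e_4\subset\partial P_0$ one has $\sigma\leq t_1$. Suppose $\sigma<t_1$. Then $\gamma_q(\sigma)$ lies in $\partial P_0=e_1\cup e_2\cup e_3\cup e_4\cup e_5$. The edges $e_1$, $e_3$, $e_5$ are themselves integral curves of $X_1$, so ODE uniqueness forbids the trajectory from reaching them; $e_4$ is excluded by the definition of $t_1$ as the first hitting time; and a crossing at $e_2$ would require, by the inward-pointing property of $X_1$ along $e_2$, that $\gamma_q(s)$ lie in the exterior of $\overline{P_0}$ for $s$ slightly less than $\sigma$, contradicting the definition of $\sigma$ (which gives $\gamma_q(s)\in\mathrm{int}(P_0)$ for all $s\in(0,\sigma)$). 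Therefore $\sigma=t_1$, and $\gamma_q$ stays in $\mathrm{int}(P_0)$ throughout $(0,t_1)$.

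Combining the two steps, $p=\gamma_q(\tau)\in e_2\subset\partial P_0$ forces $\tau\notin(0,t_1)$; and $\tau=t_1$ would give $p=r\in e_4$, impossible since $e_2\cap e_4=\emptyset$. Hence $\tau=0$, so $t_0=t_1$ and $p=q$. The delicate point in this plan is the confinement argument above: all candidate exit routes must be ruled out simultaneously using the geometric data supplied by the preceding lemma, namely that the separatrices $e_1$, $e_3$, $e_5$ are orbits of $X_1$, that $t_1$ is the first hitting time of $e_4$, and that $X_1$ points into $P_0$ along $e_2$. Once this confinement is established, injectivity of the transition map $e_2\to e_4$ along the flow of $X_1$ is immediate.
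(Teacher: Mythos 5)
Your proof is correct and rests on the same essential ingredient as the paper's: uniqueness of integral curves of $X_1$ through the common point $r$, which forces the two orbit arcs to coincide (the paper phrases this as an open--closed connectedness argument via the flow box theorem, you as a time translation $\gamma_p(s)=\gamma_q(s+\tau)$). Your confinement step, showing $\gamma_q\bigl((0,t_1)\bigr)\subset\mathrm{int}(P_0)$ so that $\tau=0$, simply makes explicit the ``never returns to $e_2$'' property that the paper invokes from the discussion preceding the lemma, so no further comparison is needed.
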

\noindent {\bf Proof of the lemma.}
\begin{eqnarray*}
\gamma_p:&[0,t_0]& \rightarrow \overline P_0,\ \gamma_p(0)=p, \ \gamma_p(t_0)=r \\
\gamma_q:&[0,t_1]& \rightarrow \overline P_0,\ \gamma_q(0)=q, \ \gamma_q(t_0)=r. 
\end{eqnarray*}
We define $i= \gamma_p[0,t_0] \cap \gamma_q[0,t_1]$. Thus, this set 
is a non-empty closed subset of each solution. Morever, the flow box theorem implies that $i$ is an open set. 
Then, $i = \gamma_p[0,t_0]$ and the proof follows from this equation. \qed

\noindent {\bf Proof of Proposition \ref{closedsolutions}.}

When the curve $\gamma_p(t)$ reaches the side $e_{4}$ at $r$ in the $uv$-plane, the curve $\overline \gamma(t)$ reaches the boundary of $H_{++}$ 
at the same point. Thus, 
in $T_r(2)$ the curve $\overline \gamma_p(t)=(x(t),y(t),u(t),v(t)),\ x(t)\geq 0$ crosses the boundary of $H_{++}$ and arrives to $H_{-+}$, now with $x(t) < 0$. 
Under the projection $\pi(x,y,u,v)=(u,v)$ it returns to the interior of the pentagon with negative values in the first entrance.
Note that $\pi \circ \overline \gamma = \gamma$. That is, the projection of $\pi\circ \gamma_p^-(t)$ is contained in  
$\gamma([0,t_0])$. Therefore, it returns to the pentagon following the solutions but in the opposite direction. 
So, Lemma \ref{unicidad} implies that this solution reaches $e_2$ at $p$ implying that $\overline \gamma_p$  is a closed solution. \qed

\begin{rem}
Observe that each line of curvature of the statement of the proposition is contained in the union 
$P_{++} \cup P_{-+}$, where these sets are the pentagonal regions defined in (\ref{Pentagon}), or in another pair of these regions.
\end{rem}

\subsection{The lines of curvature of the double torus in $\mathbb R^3$}


We consider a surface $M$, defined as the transversal intersection of the sphere of radius $r$ centered at the origin, $\mathbb S^3_r \subset \mathbb R^4$, 
with the inverse image of a value of a differentiable function $F:\mathbb R^4 \rightarrow \mathbb R$
with an isolated singularity at the origin. Let $\nu$ be a unit vector field parallel to the gradient of $F$. 
The lines of curvature transform in a proper way under the stereographic projection. We state the property in a more general setting.

\begin{prop}
Consider a surface $M \subset \mathbb S^3_r$. 
The stereographic projection $\sigma:\mathbb S^3_r\setminus{\{p\}} \subset \mathbb R^4 \rightarrow \mathbb R^3$, where $p \in \mathbb S^3_r \setminus M$, provides an immersion 
of $M$ into $\mathbb R^3$, which transforms the $\nu$-lines of curvature of $M$ into the lines of curvature of $\sigma(M)$, for any field $\nu$ normal to $M$ not parallel to the radial vector field. That is, a curve 
$l \subset M$ is a  $\nu$-line of curvature if and only if $\sigma(l)$ is a line of curvature of $\sigma(M)$. 
\end{prop}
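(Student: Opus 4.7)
The strategy is to reduce the statement to two classical facts: (a) for a hypersurface in a sphere, the exterior shape operator along a non-radial normal differs from the intrinsic shape operator by a scalar multiple of the identity, and (b) stereographic projection is a conformal diffeomorphism, and conformal diffeomorphisms between 3-dimensional Riemannian manifolds preserve principal directions of immersed surfaces.

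First, I would decompose the normal field. Let $r$ denote the outward unit radial vector field on $\mathbb S^3_r$, so $r(q)=q/\lVert q\rVert$, and write
\begin{equation*}
  \nu = \nu^{T} + f\,r,
\end{equation*}
where $\nu^{T}$ is the component of $\nu$ tangent to $\mathbb S^3_r$ and $f=\langle\nu,r\rangle$. The assumption that $\nu$ is not parallel to $r$ means $\nu^{T}\neq 0$. Since $M$ has codimension one in $\mathbb S^3_r$, and since $\nu$ is normal to $M$ in $\mathbb R^4$ while $r$ is normal to $\mathbb S^3_r\supset M$, the field $\nu^{T}$ is a non-vanishing field on $M$ normal to $M$ inside $\mathbb S^3_r$.

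Next I would compare shape operators. For $X\in T_pM\subset T_p\mathbb S^3_r$ one has $\bar\nabla_X r = X/r$, so the $r$-shape operator of any surface in $\mathbb S^3_r$ is $S_r=-\tfrac1r\mathrm{Id}$. Since $S$ depends linearly on the normal direction and only on its value at $p$,
\begin{equation*}
  S_{\nu}(X) = S_{\nu^{T}}(X) + f(p)\,S_{r}(X) = S_{\nu^{T}}(X) - \tfrac{f(p)}{r}\,X .
\end{equation*}
Therefore $S_\nu$ and $S_{\nu^{T}}$ differ by a scalar multiple of the identity and share the same eigenvectors. Because $\nu^{T}$ is a non-zero normal to $M$ in the ambient $3$-manifold $\mathbb S^3_r$, these common eigenvectors are precisely the principal directions of the hypersurface $M\subset\mathbb S^3_r$ in the usual sense. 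Hence the $\nu$-lines of curvature of $M$ coincide with the intrinsic lines of curvature of $M$ as a hypersurface of $\mathbb S^3_r$.

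Finally I would invoke the conformal invariance of principal directions. The stereographic projection $\sigma:\mathbb S^3_r\setminus\{p\}\to\mathbb R^3$ is a conformal diffeomorphism between Riemannian $3$-manifolds, so $\sigma^{*}g_{\mathbb R^3}=e^{2\phi}g_{\mathbb S^3_r}$ for some smooth $\phi$. Under such a conformal change of ambient metric the shape operator of any immersed surface transforms as $\widetilde S=e^{-\phi}(S+(d\phi\cdot N)\,\mathrm{Id})$, again an identity-shift, so umbilics go to umbilics and principal directions go to principal directions. Applying this to $\sigma|_M$ shows that a curve $\ell\subset M$ is a principal line of $M$ in $\mathbb S^3_r$ if and only if $\sigma(\ell)$ is a principal line of $\sigma(M)$ in $\mathbb R^3$. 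Combined with the previous paragraph this yields the statement.

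The only delicate point is the identity-shift formula for the shape operator under a conformal change in the ambient; it is standard (a short computation with the Koszul formula, or a direct check that conformal maps in $\mathbb R^n$ are compositions of Möbius inversions and similarities, each of which preserves principal directions), but I would include a brief indication to keep the argument self-contained.
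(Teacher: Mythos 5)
Your proposal is correct and follows essentially the same route as the paper: the paper likewise projects $\nu$ orthogonally onto $T\mathbb S^3_r$ to get a non-null normal $\hat\nu$ to $M$ inside the sphere, invokes the $R$-umbilicity of spherical surfaces (so independent normal fields give the same principal configuration, citing Lemma 2.1 of Romero-Fuster--S\'anchez-Bringas), and then uses conformality of the stereographic projection. You merely make explicit the two facts the paper delegates to references, namely the identity-shift $S_\nu=S_{\nu^{T}}-\tfrac{f}{r}\,\mathrm{Id}$ and the conformal transformation law for the shape operator.
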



\noindent{\bf Proof.}
We begin by recalling that any spherical surface $M \subset \mathbb R^4$ is $R$-umbilic, where $R$ is the {\it radial vector field}, defined as the 
unitary vector field parallel to the gradient of the function $G$. 
Moreover, any normal field independent to the radial vector field at each point of $M$ 
defines the same principal configuration (see Lemma $2.1$ in \cite{R-S}). Consider this normal field $\nu$. Its orthogonal projection to 
$T_pS^3_r$ defines a non-null field $\hat \nu$ normal to $M$ at each point. 
The stereographic projection 
as a conformal map transforms
the vector field $\hat \nu$ into one of the normal fields of $\sigma(M)\subset \mathbb R^3$, and the $\hat \nu$-lines of curvature on $M$ into
the lines of curvature $\sigma(M)$. Since the $\nu$-lines of curvature of $M$ coincide with the $\hat \nu$-lines of curvature, the proof is complete.
$\hfill\Box$

We denote the embedding $\sigma(T_r(2)), r= 1$ of the double torus into  $\mathbb R^3$ by $\mathbb T_2$.

\begin{coro}\label{torusinR3}
The principal configuration of the embedded double torus $\mathbb T_2 \subset \mathbb R^3$ has
the same structure of that of $T_r(2)$ provided by Theorem \ref{Maintheorem}.
\end{coro}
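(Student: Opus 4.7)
The plan is to view the corollary as a direct application of the previous proposition, together with the observation that $\sigma$ is a global diffeomorphism from $\mathbb{S}^3_r\setminus\{p\}$ onto $\mathbb{R}^3$ and hence transports every topological/dynamical feature of the foliation of $\nu$-lines of curvature on $T_r(2)$ to the corresponding feature of the foliation of lines of curvature on $\mathbb{T}_2$. Since Theorem \ref{Maintheorem} is formulated in terms of such features (number and type of umbilics, existence and topology of pathwise separatrices, generation of $H_1$ by separatrix cycles, and foliation of the complement by closed curves), it will suffice to verify the hypotheses of the previous proposition.

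First I would fix $r=1$ and choose the projection point $p\in\mathbb{S}^3_1$ so that $p\notin T_1(2)$; this is immediate because $T_1(2)$ is a $2$-dimensional subset of the $3$-sphere, so its complement is open and dense. Next I would check that the gradient $\nabla F=(2x,-2y,3u^2-3v^2,-6uv)$, which after normalization is the vector field $\nu$ used to define the principal foliation, is nowhere parallel to the radial vector field $R=\nabla G_r/\|\nabla G_r\|$ on $T_1(2)$. But this is exactly the linear-independence statement already established in the proof that $T_r(2)=F^{-1}(0)\cap G_r^{-1}(0)$ is a transversal intersection; that argument is valid for every $r>0$ and in particular for $r=1$. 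Hence the hypotheses of the preceding proposition are satisfied.

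Invoking the proposition, $\sigma$ carries $\nu$-lines of curvature of $T_1(2)$ bijectively to lines of curvature of $\mathbb{T}_2=\sigma(T_1(2))$, and umbilics to umbilics. Because $\sigma$ is a $C^\infty$ diffeomorphism onto its image, it preserves all of the invariants listed in Theorem \ref{Maintheorem}: the four umbilics remain four, the local index and the number of separatrices at each umbilic are diffeomorphism invariants, so the type $D_3$ is retained; the three pathwise separatrices, each an immersion of $\mathbb{S}^1$ containing all umbilics, map to three such circles on $\mathbb{T}_2$; the cycles in the complement of the closure of the separatrix set map to cycles in the corresponding complement on $\mathbb{T}_2$; and since $\sigma$ induces an isomorphism on $H_1$ with $\mathbb{Z}_2$ coefficients, the basis of $H_1(T_1(2))$ constructed from pathwise separatrices in the proof of statement $iii)$ of Theorem \ref{Maintheorem} is transported to a basis of $H_1(\mathbb{T}_2)$ whose representatives are again pathwise separatrices.

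I do not expect a real obstacle: the substantive work has all been done in Theorem \ref{Maintheorem} and in the previous proposition, together with the transversality computation in Section~3. The only point that deserves care is the reminder that the invariance of the foliation under homotheties centered at the origin, already exploited in the proof of Proposition~\ref{teoindex}, justifies restricting to $r=1$ so that $\sigma$ can be applied with the standard unit sphere, producing $\mathbb{T}_2\subset\mathbb{R}^3$ with exactly the foliation structure asserted.
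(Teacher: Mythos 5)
Your proposal is correct and follows exactly the route the paper intends: the corollary is an immediate consequence of the preceding proposition on the stereographic projection, with the hypothesis that $\nu$ is nowhere parallel to the radial field supplied by the linear-independence computation from the transversality proof in Section~3. The paper leaves these verifications implicit, so your write-up simply makes explicit the same argument.
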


We conclude by observing that the approach applied in this article to study
the lines of curvature of the double torus can be extended to describe the
lines of curvature of model embeddings of compact oriented surfaces of
genus higher than two. Specifically, we have used the differential equation
in Corollary 2.5 to describe the $\nu$-lines of curvature of the model defined
in section 3 as the link $L_r = Re \bar F \cap {\mathbb S}^3, r > 0$, of genus $(p-1)(q-1),
p, q \geq 3$, where the vector field $\nu$ is the gradient of 
$Re \bar F = z_1^p + z_2^q + \bar z_1^p + \bar z_2^q$. 
The symmetries of this model allowed us to determine the
type of umbilic points. Moreover, the topological decomposition of the link
induced a CW-structure which includes umbilic points and separatrices as
some of its cells. Furthermore, arguments of dynamical systems, similar to
those used in this article, could be applied to prove that the lines of
curvature are closed cycles. Finally, as in the case of the double torus,
the stereographic projection defines the embedding of $L_r, r > 0$ into $\mathbb R^3$
which transforms the foliations of $\nu$-lines of curvature into the foliation
of lines of curvature of the image of the link in $\mathbb R^3$. However, certain
problems for the determination of the number of umbilic points appeared in
the case of an arbitrary genus, which need to be solved to complete the
description.

\medskip
\textbf{Acknowledgement:} 
The work of the second and third authors was supported by CONACYT grant 283017, the
third author was also supported by PAPIIT-DGAPA-UNAM grant IN118217.

\end{document}